\documentclass{amsart}
\usepackage[matrix,cmtip,arrow,curve]{xy}
\usepackage{tikz-cd}
\usepackage[unicode]{hyperref}
\usepackage{amssymb}
\usepackage{stmaryrd}
\usepackage{footnote}
\usepackage{amsmath}
\usepackage{amsthm}
\usepackage{amscd} 
\usepackage{enumitem}

\usepackage{tikz}
\usetikzlibrary{arrows,positioning}
\usepackage{tikz-cd}
\usetikzlibrary{decorations.pathmorphing}

\theoremstyle{plain}
\newtheorem{thm}{Theorem}[section]
\newtheorem{theorem}[thm]{Theorem}

\newtheorem{proposition}[thm]{Proposition}
\newtheorem{lemma}[thm]{Lemma}
\newtheorem{corollary}[thm]{Corollary}
\newtheorem*{mainthm}{Main Theorem}
\theoremstyle{definition}
\newtheorem{definition}[thm]{Definition}
\newtheorem{example}[thm]{Example}

\theoremstyle{remark}
\newtheorem{remark}[thm]{Remark}

\newcommand{\sCat}{\ensuremath{\mathbf{sCat}}}
\newcommand{\sSet}{\mathbf{sSet}}
\newcommand{\sProp}{\ensuremath{\mathbf{sProp}}}
\newcommand{\Cat}{\ensuremath{\mathbf{Cat}}}
\newcommand{\sProperad}{\ensuremath{\mathbf{sProperad}}}

\newcommand{\inv}{{-1}}
\newcommand{\Set}{\mathbf{Set}}
\newcommand{\Hom}{\operatorname{Hom}}
\DeclareMathOperator*{\colim}{\operatorname{colim}}
\newcommand{\ua}{\underline a}
\newcommand{\ub}{\underline b}
\newcommand{\ba}{\binom{\ub}{\ua}}
\newcommand{\fC}{\mathfrak{C}}
\newcommand{\uc}{\underline{c}}
\newcommand{\ud}{\underline{d}}
\newcommand{\dc}{\binom{\ud}{\uc}}
\newcommand{\fdc}{\binom{f\ud}{f\uc}}
\newcommand{\profilev}{(\inp(v);\out(v))}
\newcommand{\dch}{(\uc;\ud)}
\DeclareMathOperator{\vertex}{Vt}
\DeclareMathOperator{\inp}{in}
\DeclareMathOperator{\out}{out}
\DeclareMathOperator{\Ob}{Ob}
\newcommand{\id}{\operatorname{id}}
\newcommand{\col}{\operatorname{Col}}

\newcommand{\ord}{\operatorname{ord}}

\newcommand{\calB}{\mathcal B}
\newcommand{\calC}{\mathcal C}
\newcommand{\calD}{\mathcal D}
\newcommand{\calG}{\mathcal G}
\newcommand{\calH}{\mathcal H}
\newcommand{\calI}{\mathcal I}
\newcommand{\calP}{\mathcal P}
\newcommand{\calQ}{\mathcal Q}
\newcommand{\calR}{\mathcal R}
\newcommand{\calW}{\mathcal W}

\newcommand{\Gr}{\mathtt{Gr}}

\renewcommand{\profilev}{\binom{\out(v)}{\inp(v)}}

\newcommand{\fD}{{\mathfrak D}}
\newcommand{\fdch}{(f\uc;f\ud)}
\newcommand{\fba}{\binom{f\ub}{f\ua}}
\newcommand{\calE}{\mathcal E}
\newcommand{\calF}{\mathcal F}

\newcommand{\calL}{\mathcal L}
\newcommand{\calK}{\mathcal K}

\newcommand{\wm}{\mathcal W_{\bM}}
\newcommand{\wn}{\mathcal W_{\bN}}

\newcommand{\rlp}[1]{{#1}^\boxslash}
\newcommand{\llp}[1]{{}^\boxslash\!{#1}}

\newcommand{\bM}{\mathbf M}
\newcommand{\bN}{\mathbf N}
\newcommand{\bP}{\mathbf P}

\newcommand{\sDioperad}{\ensuremath{\mathbf{sDioperad}}}
\newcommand{\Iso}{\operatorname{Iso}}

\newcommand{\dcprime}{(\uc';\ud')}

\newcommand{\fE}{\mathfrak{E}}

\title{A simplicial model for infinity properads}

\author[P. Hackney]{Philip Hackney}
\address{Institut f\"ur Mathematik \\ Universit\"at Osnabr\"uck \\ Osnabr\"uck, Germany}
\curraddr{Department of Mathematics\\ Macquarie University\\ NSW 2109 \\ Australia}
\email{philip@phck.net}

\author[M. Robertson]{Marcy Robertson}
\address{School of Mathematics and Statistics \\ The University of Melbourne \\ Melbourne, Victoria, Australia}\email{marcy.robertson@unimelb.edu.au}

\author[D. Yau]{Donald Yau}
\address{Department of Mathematics\\
The Ohio State University at Newark\\
Newark, OH \\ USA}
\email{dyau@math.ohio-state.edu}

\begin{document}

\begin{abstract}
	We show how the model structure on the category of simplicially-enriched (colored) props induces a model structure on the category of simplicially-enriched (colored) properads.
	A similar result holds for dioperads.
\end{abstract}

\subjclass[2010]{55P48, 18D20, 55U35, 18D50, 19D23}
\keywords{Properads, dioperads, infinity properads, model categories, cofibrantly generated model categories, simplicial categories}

\maketitle

This short note is an important component in an ongoing project to understand `up-to-homotopy' prop(erad)s.
Props, properads, and dioperads are devices like operads, but which are capable of controlling bialgebraic structures.
The notion of prop originated in the work of Adams and MacLane \cite{catalg}, while properads were introduced much later by Vallette \cite{vallette}.
Some of the best known properads include those that govern Lie bialgebras and Frobenius algebras (see, for example, \cite{MR2397628}).

Dioperads, like properads, are smaller versions of props defined by pasting schemes of graphs which are simply connected. 
A dioperad (which first appear in the thesis of Gan \cite{gan}; see also \cite{MR0373846, MR2414322}) describes an algebraic structure that has a multiplication and a comultiplication with relations which can be represented by simply connected graphs.  
As an illustrative example, one should note that a dioperad can describe the structure of a Lie bialgebra but not a bialgebra. 

In \cite{hrybook} we construct a combinatorial model for objects like properads, but where the properadic structure only holds up to coherent higher homotopy.
There, we present such `infinity properads' as objects of the presheaf category $\Set^{\Gamma^{op}}$ satisfying inner-horn filling conditions, where $\Gamma$ is a certain category of graphs.
The category $\Gamma$ is an extension of both the simplicial category $\Delta$ and the Moerdijk-Weiss dendroidal category $\Omega$ \cite{mw}, and our definition of infinity properads is analogous to that of quasi-categories \cite{joyal} (or infinity categories \cite{htt}) and dendroidal inner Kan complexes \cite{mw2}.
In a future paper we will prove the existence of a Quillen model structure on the category of graphical sets $\Set^{\Gamma^{op}}$ so that the fibrant objects are precisely the infinity properads; antecedents to this structure  are the Joyal model structure on simplicial sets $\Set^{\Delta^{op}}$ \cite{joyal,htt} and the Cisinski-Moerdijk model structure on dendroidal sets $\Set^{\Omega^{op}}$ \cite{cm-ho}.

In the present work, we study (small) \emph{simplicially-enriched properads}, which we expect to be the rigid model for infinity properads, much as simplicially-enriched categories \cite{bergner} give a model for infinity(-one) categories and simplicially-enriched operads give a model for infinity operads \cite{cm-simpop}.
Namely, in \cite{hry3} we will present a functor, called the `homotopy coherent nerve'
\[
	N_{hc} : \sProperad \to \Set^{\Gamma^{op}}
\]
which we anticipate, in analogy with the corresponding result in the categorical setting \cite{joyal_rigid,htt}, will be the right adjoint in a Quillen-equivalence of model categories.\footnote{This would also provide an alternate proof of the equivalence between the category of simplicial operads and that of dendroidal sets, which appears in \cite{cm-ho,cm-simpop,cm-ds}.}
For such a theorem to even be stated, we of course require a model structure on $\sProperad$, the category of small simplicially-enriched properads (henceforth called `simplicial properads').

Given a simplicial prop, properad, or dioperad $\calP$, we can look at its underlying simplicial category by discarding all $\calP \dc$ with $|\uc| \neq 1 \neq |\ud|$.
Further, given a simplicial category $\calC$, we can get a discrete category of components $\pi_0\calC$ by setting $\Ob \pi_0 \calC = \Ob \calC$ and $(\pi_0 \calC)(a, b) = \pi_0 (\calC(a,b))$.
For concision, we will just write $\pi_0$ for any of the composites  
\[ \begin{tikzcd}[row sep=small]
\sProp \dar[swap]{\text{forget}} \arrow[dr, bend right=10] \\
\sProperad \dar[swap]{\text{forget}}  \rar & \sCat \rar{\pi_0} & \Cat \\
\sDioperad \arrow[ur, bend left=10]
\end{tikzcd} \]
from one of the categories on the left into $\Cat$.

\begingroup
\def\thethm{A}
\addtocounter{thm}{-1}
\begin{definition}\label{WEandfibrations} Let $f:\calP \to \calQ$ be a morphism of simplicial props, properads, or dioperads.
We say that $f$ is a \emph{weak equivalence} if
\begin{enumerate}[label={(W{\arabic*})},ref={W\arabic*}]
\item\label{W1} for each input-output profile $\ba$ in $\col(\calP)$ (that is, pair of lists of colors of $\calP$) the morphism 
\[
	f:\calP\ba\longrightarrow\calQ\fba
\] 
is a weak homotopy equivalence of simplicial sets; and
\item\label{W2} the functor $\pi_{0}f:\pi_{0}\calP\rightarrow\pi_{0}\calQ$ is an equivalence of categories. 
\end{enumerate}
We say that the morphism $f$ is a \emph{fibration} if
\begin{enumerate}[label={(F{\arabic*})},ref={F\arabic*}]
\item\label{F1} for each input-output profile $\ba$ in $\col(\calP)$ the morphism 
\[
	f:\calP\ba\longrightarrow\calQ\fba 
\] 
is a Kan fibration of simplicial sets; and
\item\label{F2} the functor $\pi_{0}f:\pi_{0}\calP\rightarrow\pi_{0}\calQ$ is an isofibration.\footnote{A functor $p: \calE \to \calB$ in $\Cat$ is called an \emph{isofibration} if 
for each isomorphism $h: p(e) \to b$ in $\calB$, there exists an isomorphism $g: e \to e'$ in $\calE$ with $p(g) = h$.}
\end{enumerate}
\end{definition}
\endgroup

The main thereom of \cite{bergner} states that $\sCat$ admits a model structure\footnote{This model structure is cofibrantly generated. Sets of generating (acyclic) cofibrations are recalled in Definition \ref{generatingcof}.} so that a map $f: \calC \to \calD$ is a weak equivalence (respectively, fibration) if and only if it is locally one (that is, $f: \calC(a,b) \to \calD(fa, fb)$ is one for all $a,b\in \Ob(\calC)$) and if $\pi_0f$ is an equivalence of categories (respectively, isofibration).

\begin{mainthm}
	The category of simplicial properads and the category of simplicial dioperads admit model structures with the weak equivalences and fibrations from Definition \ref{WEandfibrations}.
\end{mainthm}

We should first point out that these model structures cannot be lifted from the model structure on simplicial operads \cite{cm-simpop}, as the conditions \eqref{W1}, \eqref{F1} in Definition \ref{WEandfibrations} would only be relevant when $|\ub| = 1$. 

It is possible to prove the main theorem (at least in the case of simplicial properads) by imitating the proofs in \cite{hackneyrobertson2}.
This has the benefit that it requires no new ideas, but this approach is both technically difficult and tedious.
The approach we take in this paper rests on Proposition \ref{squeezy theorem}, which we find novel and interesting in its own right. 
Were are aware of only two precursors in the literature. The first is the way that Hovey restricts the model structure on all topological spaces to the coreflective subcategory of Kelley spaces \cite[2.4.23]{hovey},
while the second is Corollary \ref{corollary thing}, which was originally due to Intermont and Johnson in their study of ex-spaces \cite[Lemma 8.8]{MR1888677}.
Proposition \ref{squeezy theorem} allows us to \emph{apply} the results\footnote{See also later results of Caviglia \cite{caviglia}, who extended these model structures to enriching categories other than $\sSet$.} of the first two authors \cite{hackneyrobertson2} to obtain the desired model structure on $\sProperad$.

In the next section, we recall a few ideas from the theory of Quillen model categories.
Proposition \ref{squeezy theorem} seems to be new, and is a primary technical tool in the proof of the main theorem of the paper.
In section \ref{section graphs}, we will recall some notation and definitions about graphs and generalized props, most of which can be found in \cite{yj}. 
Sections \ref{section local} and \ref{section adj} together show that most of the hypotheses of Proposition \ref{squeezy theorem} hold.
Section \ref{section local} is about the structure of local equivalences in our categories of generalized props, and does not deal with any adjunctions.
Section \ref{section adj} illustrates quite nicely why the main theorem of this paper is \emph{not} formal -- we really depend on some internal structures in our objects of interest.
Finally, in section \ref{section model}, we actually apply Proposition \ref{squeezy theorem} to prove the main theorem.

\subsection*{Acknowledgments} 
The authors would like to thank Alexander Berglund, Martin Markl, Irakli Patchkoria, and Emily Riehl.

\section{Cofibrantly generated model categories}\label{section cgmc}

Let $\bM$ be a category and $\bM^{[1]}$ its category of arrows. 
We now borrow some notation from \cite{riehl}.
If 
$i : A \to B, f: X \to Y$ are morphisms of $\bM$ (that is, objects of $\bM^{[1]}$), we write $i\boxslash f$ if the map
\begin{align*}
	\hom_{\bM} (B, X) &\to \hom_{\bM^{[1]}} (i, f)  \\
	g & \mapsto \left( \begin{tikzcd}[ampersand replacement=\&]
		A \dar{i} \rar{g\circ i} \& X \dar{f} \\
		B \rar{f \circ g} \& Y
	\end{tikzcd}\right) 
\end{align*}
is surjective. 
In other words, $i\boxslash f$ if, for every commutative diagram
\[ \begin{tikzcd}
	A \rar \dar{i} & X \dar{f} \\
	B \rar \arrow[dotted]{ur} & Y,
\end{tikzcd} \]
a lift $B\to X$ exists, making both triangles commute. 
If $\calK$ is a class of maps in $\bM$, we write $\rlp{\calK}$ for the collection of morphisms which have the right lifting property with respect to $\calK$; that is, $\rlp{\calK}$ is the collection of $h$ satisfying $k\boxslash h$ for all $k\in \calK$.
Similarly, we write $\llp{\calK}$ for the collection of all $h$ so that $h\boxslash k$ for all $k\in \calK$.

Suppose that $\mathcal K$ is a class of maps in some category $\bM$.
A map $f$ is a $\calK$-cell complex, that is, $f\in \calK\text{-cell}$, if it is a transfinite composition of pushouts of elements of $\calK$.

\begin{lemma}\label{Lcell}
If $L \colon \bM \to \bN$ is a functor which
preserves colimits and $\calK$ is a class of maps in $\bM$, then $L(\calK\text{-cell}) \subset (L\calK)\text{-cell}$. \qed
\end{lemma}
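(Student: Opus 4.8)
The plan is to observe that both of the operations appearing in the definition of a $\calK$-cell complex---taking a pushout of a map in $\calK$, and forming a transfinite composition---are instances of colimits, and hence are preserved by the colimit-preserving functor $L$. So the whole content of the lemma is the bookkeeping needed to see that $L$ carries a presentation of $f$ as such a complex to a presentation of $Lf$ as an $(L\calK)$-cell complex.

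Let $f\in\calK\text{-cell}$. By definition $f$ is the transfinite composition of a $\lambda$-indexed diagram $X\colon\lambda\to\bM$ (with $\lambda$ an ordinal), meaning that the natural map $\operatorname{colim}_{\alpha<\beta}X_\alpha\to X_\beta$ is an isomorphism at each limit ordinal $\beta<\lambda$, that each successor map $X_\alpha\to X_{\alpha+1}$ sits in a pushout square
\[
\begin{tikzcd}
A_\alpha \rar \dar{k_\alpha} & X_\alpha \dar \\
B_\alpha \rar & X_{\alpha+1}
\end{tikzcd}
\]
with $k_\alpha\in\calK$, and that $f$ is the canonical map $X_0\to\operatorname{colim}_{\alpha<\lambda}X_\alpha$. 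First I would apply $L$ to the entire diagram to get $LX\colon\lambda\to\bN$. Because $L$ preserves colimits it sends the square above to a pushout square in $\bN$, so each successor map $LX_\alpha\to LX_{\alpha+1}$ exhibits $LX_{\alpha+1}$ as a pushout of $Lk_\alpha\in L\calK$. Likewise, $L$ carries the limit-stage isomorphisms $\operatorname{colim}_{\alpha<\beta}X_\alpha\xrightarrow{\ \cong\ }X_\beta$ to isomorphisms $\operatorname{colim}_{\alpha<\beta}LX_\alpha\xrightarrow{\ \cong\ }LX_\beta$, so $LX$ is again a valid transfinite-composition diagram, and $L$ identifies $Lf$ with its composition $LX_0\to\operatorname{colim}_{\alpha<\lambda}LX_\alpha$. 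Assembling these observations, $Lf$ is a transfinite composition of pushouts of elements of $L\calK$, i.e.\ $Lf\in(L\calK)\text{-cell}$, which is exactly the claim.

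There is essentially no serious obstacle, since everything reduces to the single fact that pushouts and transfinite compositions are colimit-shaped constructions preserved by $L$. The only point that warrants any care is checking that $LX$ genuinely satisfies the continuity (isomorphism-at-limit-ordinals) requirement of a transfinite-composition diagram, rather than merely having the correct successor maps; this is precisely where preservation of the limit-stage colimits is used, and it is immediate once isolated.
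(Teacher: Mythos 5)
Your proof is correct and matches the paper, which states this lemma without any proof (it is marked as immediate): the entire content is that pushouts, limit-ordinal stages, and the final composite of a transfinite composition are all colimits, hence preserved by $L$, so a cell presentation of $f$ is carried to a cell presentation of $Lf$. Your write-up, including the care taken with continuity at limit ordinals, is exactly the standard unpacking the paper takes for granted.
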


If $\calK$ is a class of maps in $\bN$ and $F : \bM \to \bN$ is any functor, we write $F^\inv(\calK)$ for the class consisting of all maps $f$ so that $F(f) \in \calK$.

\begin{lemma}\label{rlp adjunction}
	Let 
\[
	L \colon \bM \rightleftarrows \bN \colon U
\]
be an adjoint pair of functors.
If $\calK$ is a class of maps in $\bM$, then 
\[
	U^{-1}(\rlp{\calK}) = \rlp{(L\calK)}.
\]
\end{lemma}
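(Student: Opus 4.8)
The plan is to reduce the equality of classes to the single elementary fact that, for a map $k\colon A \to B$ in $\bM$ and a map $f\colon X \to Y$ in $\bN$, one has $Lk \boxslash f$ if and only if $k \boxslash Uf$. Granting this, the lemma is immediate: a map $f$ in $\bN$ lies in $\rlp{(L\calK)}$ precisely when $Lk \boxslash f$ for every $k \in \calK$, which by the claim is equivalent to $k \boxslash Uf$ for every $k \in \calK$, i.e.\ to $Uf \in \rlp{\calK}$, i.e.\ to $f \in U^{-1}(\rlp{\calK})$.

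To prove the claim, I would unwind the definition of $\boxslash$ given just above the statement, in terms of surjectivity of a comparison map. For $Lk \boxslash f$ the relevant map is
\[
	\Phi\colon \hom_{\bN}(LB, X) \to \hom_{\bN^{[1]}}(Lk, f),
\]
and for $k \boxslash Uf$ it is
\[
	\Psi\colon \hom_{\bM}(B, UX) \to \hom_{\bM^{[1]}}(k, Uf).
\]
The adjunction $L \dashv U$ supplies a natural bijection $\hom_{\bN}(LB, X) \cong \hom_{\bM}(B, UX)$, written $g \mapsto \bar g$, on the sources. I would next produce a compatible bijection on the targets, $\hom_{\bN^{[1]}}(Lk, f) \cong \hom_{\bM^{[1]}}(k, Uf)$: an element of the first set is a commuting square $(u,v)$ with $fu = v\,(Lk)$, and transposing $u$ and $v$ across the adjunction turns this, by naturality, into a commuting square $(\bar u, \bar v)$ with $(Uf)\bar u = \bar v\, k$; the correspondence is evidently a bijection.

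The main (and essentially only) real content is then checking that these two bijections fit into a commuting square with $\Phi$ and $\Psi$, so that $\Phi$ is surjective if and only if $\Psi$ is. Concretely, for $g\colon LB \to X$ one must verify $\overline{g \circ Lk} = \bar g \circ k$ and $\overline{f \circ g} = Uf \circ \bar g$; these are precisely the naturality of the adjunction isomorphism in the variable $B$ (along $k$) and in the variable $X$ (along $f$). This is the step where I expect to have to be careful, though it is entirely formal: once the square commutes and its horizontal legs are bijections, surjectivity of one vertical leg forces surjectivity of the other. This yields $Lk \boxslash f \iff k \boxslash Uf$, and hence the lemma.
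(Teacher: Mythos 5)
Your proposal is correct and follows essentially the same route as the paper: the paper's proof also unwinds $\rlp{(-)}$ into surjectivity of the comparison maps $\hom_{\bM}(B,UX)\to\hom_{\bM^{[1]}}(k,Uf)$ and $\hom_{\bN}(LB,X)\to\hom_{\bN^{[1]}}(Lk,f)$ and passes between them ``by adjointness.'' The only difference is that you spell out what the paper leaves implicit -- the induced bijection on arrow-category hom sets and the naturality checks making the square commute -- which is a faithful expansion, not a different argument.
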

\begin{proof}
	We have $ f : X \to Y \in U^{-1}(\rlp{\calK})$ if and only if $U(f) \in \rlp{\calK}$.
	This is equivalent to the map
	\begin{align*}
		\hom_{\bM} (B, UX) &\to \hom_{\bM^{[1]}} (k, Uf)  \\
		g & \mapsto \left( \begin{tikzcd}[ampersand replacement=\&]
			A \dar{k} \rar{g\circ k} \& UX \dar{Uf} \\
			B \rar{Uf \circ g} \& UY
		\end{tikzcd}\right) 
	\end{align*}
	being surjective for all $k\in \calK$.
	By adjointness (which extends to the level of arrow categories), this is equivalent to surjectivity of 
	\[ \hom_{\bN} (LB, X) \to \hom_{\bN^{[1]}} (Lk, f) \]
	for all $k\in \calK$, i.e., $f\in \rlp{(L\calK)}$.
\end{proof}

Let $\bM$ be a cocomplete category and $A\in \bM$ an object. 
We say that $A$ is \emph{finite} if for every sequence
$X_0 \to X_1 \to \cdots \to X_n \to \cdots$ 
indexed by the natural numbers $\mathbb N$, the map
\[
	\colim_i \bM(A,X_i) \to \bM(A, \colim_i X_i)
\]
is an isomorphism.
There is a more general version of this, where one can speak of an object $A$ being small relative to a class of maps $\calK$ in $\bM$ (see \cite[2.1.3]{hovey}), but in our applications we only deal with finite objects, which are small relative to any class of maps in $\bM$.

\begin{definition}
	A model category $\bM$ is \emph{cofibrantly generated} if there are sets $I$ and $J$ of maps such that
	\begin{itemize}
	\item The domains of $I$ are small relative to $I\text{-cell}$;
	\item The domains of $J$ are small relative to $J\text{-cell}$;	
	\item The class of fibrations is $\rlp{J}$; and
	\item The class of acyclic fibrations is $\rlp{I}$.
	\end{itemize}
\end{definition}

Such a cofibrantly generated model category has $\llp{(\rlp{I})}$ as its class of cofibrations and $\llp{(\rlp{J})}$ as its class of acyclic cofibrations.

Recall the following recognition theorem \cite[2.1.19]{hovey} for cofibrantly generated model categories.

\begin{theorem}\label{Kan theorem}
Let $\bM$ be a bicomplete category, $\calW$ a subcategory of $\bM$, and $I,J$ sets of maps of $\bM$. Then there is a cofibrantly generated model category structure on $\bM$ with $I$ as the set of generating cofibrations, $J$ as the set of generating acyclic cofibrations, and $\calW$ as the subcategory of weak equivalences if and only if the following are satisfied:
\begin{enumerate}[label=\emph{({\Roman*})},ref={\thethm.\Roman*}]
	\item The subcategory $\calW$ has the two out of three property and is closed under retracts.  \label{KT1}
	\item The domains of $I$ are small relative to $I\text{-cell}$. \label{KT2}
	\item The domains of $J$ are small relative to $J\text{-cell}$. \label{KT3}
	\item $J\text{-cell} \subset \calW \cap \llp{(\rlp{I})}$. \label{KT4}
	\item $\rlp{I} \subset \calW \cap \rlp{J}$. \label{KT5}
	\item Either $\calW \cap \llp{(\rlp{I})} \subset \llp{(\rlp{J})}$ or $\calW \cap \rlp{J} \subset \rlp{I}$. \label{KT6}
\end{enumerate}
\qed
\end{theorem}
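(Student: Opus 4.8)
The plan is to prove both directions of the biconditional, with sufficiency as the substantive half. For sufficiency I would take as candidate classes the cofibrations $\llp{(\rlp{I})}$, the acyclic cofibrations $\llp{(\rlp{J})}$, the fibrations $\rlp{J}$, the acyclic fibrations $\rlp{I}$, and $\calW$ as the weak equivalences. The two factorizations come straight from Quillen's small object argument: hypotheses \eqref{KT2} and \eqref{KT3} supply the smallness needed to run the argument on $I$ and on $J$, producing functorial factorizations $f = p\circ i$ with $i\in I\text{-cell}\subset\llp{(\rlp{I})}$ and $p\in\rlp{I}$ in one case, and $i\in J\text{-cell}\subset\llp{(\rlp{J})}$ and $p\in\rlp{J}$ in the other (Lemma \ref{Lcell} is the relevant bookkeeping for the cell complexes). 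The lifting axioms for the pairs (cofibration, acyclic fibration) and (acyclic cofibration, fibration) are then immediate from the definitions of $\llp{(-)}$ and $\rlp{(-)}$, and closure under retracts holds automatically for every class defined by a lifting property, with $\calW$ covered by \eqref{KT1}.

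The crux is to identify the acyclic (co)fibrations with $\calW\cap(\text{fibrations})$ and $\calW\cap(\text{cofibrations})$. Condition \eqref{KT5} gives $\rlp{I}\subset\calW\cap\rlp{J}$, while \eqref{KT4}, together with the facts that every element of $\llp{(\rlp{J})}$ is a retract of a $J\text{-cell}$ complex and that both $\calW$ and $\llp{(\rlp{I})}$ are retract-closed, gives $\llp{(\rlp{J})}\subset\calW\cap\llp{(\rlp{I})}$. The reverse inclusions are precisely the two clauses of \eqref{KT6}, but only one is assumed, so the main obstacle is to show that the two clauses of \eqref{KT6} are equivalent given \eqref{KT1}--\eqref{KT5}. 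I would prove this by a retract argument. Assuming $\calW\cap\rlp{J}\subset\rlp{I}$, take $f\in\calW\cap\llp{(\rlp{I})}$ and factor it as $p\circ i$ with $i\in J\text{-cell}$ and $p\in\rlp{J}$; then \eqref{KT4} gives $i\in\calW$, two-out-of-three gives $p\in\calW$, hence $p\in\calW\cap\rlp{J}\subset\rlp{I}$, so $f$ lifts against $p$ and is thereby exhibited as a retract of $i\in\llp{(\rlp{J})}$. The converse implication runs symmetrically, factoring instead through $I$ and invoking \eqref{KT5} to place $p$ in $\calW$ before running the dual retract argument.

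Once \eqref{KT6} is upgraded to both clauses, the identifications $\rlp{I}=\calW\cap\rlp{J}$ and $\llp{(\rlp{J})}=\calW\cap\llp{(\rlp{I})}$ hold, so the factorizations above genuinely split each map into an (acyclic) cofibration followed by an (acyclic) fibration, and all of Quillen's axioms are verified. For necessity I would simply read the six conditions off an assumed cofibrantly generated model structure on $\bM$ with the given $I$, $J$, $\calW$: \eqref{KT1} is part of being a model category, \eqref{KT2} and \eqref{KT3} are the definition of cofibrant generation, and \eqref{KT4}, \eqref{KT5}, and the first clause of \eqref{KT6} follow from the standard characterizations of acyclic cofibrations as $\calW\cap(\text{cofibrations})$ and acyclic fibrations as $\calW\cap(\text{fibrations})$.
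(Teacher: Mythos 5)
Your proof is correct. The paper itself gives no proof of this theorem---it is recalled verbatim from Hovey \cite[2.1.19]{hovey} with a \qedsymbol---and your argument is essentially the standard proof from that reference: small object argument on $I$ and $J$ for the factorizations, the retract argument to show each clause of \eqref{KT6} implies the other (which is precisely the crux of Hovey's proof), and then the identifications $\rlp{I}=\calW\cap\rlp{J}$ and $\llp{(\rlp{J})}=\calW\cap\llp{(\rlp{I})}$ yield the lifting and factorization axioms.
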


Notice that both parts of \eqref{KT6} hold simultaneously in any cofibrantly generated model category $\bM$.

Given a pair of adjunctions
\[ \begin{tikzcd}
\bM \arrow[r, shift left, "F_1"] & 
\bN \arrow[r, shift left, "F_2"] \arrow[l, shift left, "U_1"] &
\bP \arrow[l, shift left, "U_2"] 
\end{tikzcd} \]
we shall call the adjunction $(F_2, U_2)$ an \emph{adjunction over $\bM$}.

\begin{proposition}\label{squeezy theorem}
Let $\bM, \bN, \bP$ be bicomplete categories and let $\calL \subset \bN$ be a class of maps and $I, J \subset \bN$ be sets of maps.
Suppose further that there is an adjunction $(F_2,U_2)$ over $\bM$:
\[ \begin{tikzcd}
\bM \arrow[r, shift left, "F_1"] \arrow[rr, bend left=50, swap,  "F_0"]& 
\bN \arrow[r, shift left, "F_2"] \arrow[l, shift left, "U_1"] &
\bP \arrow[l, shift left, "U_2"] \arrow[ll, bend left=50, swap,  "U_0"]
\end{tikzcd} \]
Assume that the following hold.
\begin{enumerate}[label=\emph{({\Alph*})},ref={\Alph*}]
	\item
	$\bM$ admits the structure of a cofibrantly-generated model category with weak equivalences $\wm$ and generating (acyclic) cofibrations $I_0$ (resp. $J_0$).\label{ST1} 
	\item
	$\bP$ admits the structure of a cofibrantly-generated model category with weak equivalences $\calW_{\bP}
	$ and generating (acyclic) cofibrations $F_0I_0\cup F_2I$ (resp. $F_0J_0\cup F_2J$).\label{ST2}
	\item The subcategory $\wn = (U_1^\inv\wm) \cap \calL$ has the two out of three property and is closed under \label{ST5}retracts.\footnote{Note that $U_1^\inv\wm$ automatically satisfies the two out of three property and is closed under retracts; thus it is sufficient (but not necessary) to show that $\calL$ satisfies two out of three and is closed under retracts. Indeed, in our applications of this theorem, $\calL$ will not satisfy two out of three.}
	\item The domains of $F_1 I_0 \cup I$ are small relative to $(F_1 I_0 \cup I)\text{-cell}$.\label{ST3}
	\item The domains of $F_1 J_0 \cup J$ are small relative to $(F_1 J_0 \cup J)\text{-cell}$.\label{ST4}
	\item
	$\rlp I = \calL \cap \rlp J$.\label{ST6}
	\item
	$F_2^{-1} \left( \calW_{\bP} \right) \subset \calW_{\bN} 
	$.\label{ST7}
\end{enumerate}

Then $\bN$ admits the structure of a cofibrantly generated model category, with weak equivalences $\wn = (U_1^\inv\wm) \cap \calL$, generating cofibrations $F_1I_0 \cup I$, and generating acyclic cofibrations $F_1J_0 \cup J$.
\end{proposition}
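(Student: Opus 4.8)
The plan is to apply the recognition theorem (Theorem \ref{Kan theorem}) to $\bN$ with $\calW = \wn$, with generating cofibrations $\mathcal I := F_1 I_0 \cup I$, and with generating acyclic cofibrations $\mathcal J := F_1 J_0 \cup J$. Three of the six hypotheses of that theorem are immediate: condition (I) is hypothesis \ref{ST5}, condition (II) is hypothesis \ref{ST3}, and condition (III) is hypothesis \ref{ST4}. The remaining conditions (IV), (V), (VI) all hinge on first identifying the two right-lifting classes $\rlp{\mathcal I}$ and $\rlp{\mathcal J}$ in $\bN$.

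To compute these I would use that $\rlp{\mathcal I} = \rlp{(F_1 I_0)} \cap \rlp I$ and, by Lemma \ref{rlp adjunction} applied to the adjunction $(F_1,U_1)$, that $\rlp{(F_1 I_0)} = U_1^{-1}(\rlp{I_0})$. Since $\bM$ is cofibrantly generated with generators $I_0, J_0$ (hypothesis \ref{ST1}), $\rlp{I_0}$ is the class of acyclic fibrations and $\rlp{J_0}$ the class of fibrations of $\bM$. Hence
\[
	\rlp{\mathcal I} = U_1^{-1}(\text{acyclic fibrations of } \bM) \cap \rlp I, \qquad \rlp{\mathcal J} = U_1^{-1}(\text{fibrations of } \bM) \cap \rlp J .
\]
With these descriptions, conditions (V) and (VI) become short, diagram-free checks. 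For (V), a map $f \in \rlp{\mathcal I}$ has $U_1 f$ an acyclic fibration of $\bM$ — so $U_1 f \in \wm$ and $U_1 f$ is a fibration — and has $f \in \rlp I = \calL \cap \rlp J$ by hypothesis \ref{ST6}; reading off the pieces gives $f \in (U_1^{-1}\wm)\cap\calL = \wn$ and $f \in U_1^{-1}(\text{fibrations})\cap\rlp J = \rlp{\mathcal J}$, so $\rlp{\mathcal I}\subset \wn\cap\rlp{\mathcal J}$. For (VI) I would verify the second alternative, $\wn\cap\rlp{\mathcal J}\subset\rlp{\mathcal I}$: if $f$ lies in the left-hand side then $U_1 f$ is both a weak equivalence and a fibration of $\bM$, hence an acyclic fibration, while $f\in\calL\cap\rlp J=\rlp I$ again by hypothesis \ref{ST6}; together these say $f\in\rlp{\mathcal I}$.

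Condition (IV), namely $\mathcal J\text{-cell}\subset\wn\cap\llp{(\rlp{\mathcal I})}$, is the crux, and its weak-equivalence half is the step I expect to be the main obstacle. The lifting half is formal: (V) gives $\rlp{\mathcal I}\subset\rlp{\mathcal J}$, whence $\mathcal J\text{-cell}\subset\llp{(\rlp{\mathcal J})}\subset\llp{(\rlp{\mathcal I})}$. The genuine difficulty is showing $\mathcal J\text{-cell}\subset\wn$, because the natural functor to apply, $U_1$, is a right adjoint and need not preserve the colimits used to build cells. This is precisely where the auxiliary category $\bP$ and the ``over $\bM$'' structure (through which $F_0 = F_2 F_1$) are needed: the left adjoint $F_2$ does preserve colimits, and it carries generators to generators, $F_2\mathcal J = F_0 J_0 \cup F_2 J$, which is exactly the set of generating acyclic cofibrations of $\bP$ by hypothesis \ref{ST2}.

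Thus by Lemma \ref{Lcell},
\[
	F_2(\mathcal J\text{-cell}) \subset (F_2\mathcal J)\text{-cell} = (F_0 J_0\cup F_2 J)\text{-cell} \subset \mathcal W_{\bP},
\]
the last inclusion holding because cells built from the generating acyclic cofibrations of $\bP$ are acyclic cofibrations, hence weak equivalences. Finally hypothesis \ref{ST7} gives $\mathcal J\text{-cell}\subset F_2^{-1}(\mathcal W_{\bP})\subset\wn$, completing (IV). Having verified conditions (I)--(VI), Theorem \ref{Kan theorem} produces the asserted cofibrantly generated model structure on $\bN$ with weak equivalences $\wn$, generating cofibrations $\mathcal I$, and generating acyclic cofibrations $\mathcal J$.
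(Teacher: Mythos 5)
Your proposal is correct and follows essentially the same route as the paper's proof: both apply the recognition theorem, dispose of conditions (I)--(III) via hypotheses (E), (C), (D), identify $\rlp{(F_1I_0)}$ with $U_1^{-1}(\rlp{I_0})$ via Lemma \ref{rlp adjunction} to settle (V) and (VI) using (A) and (F), and handle the weak-equivalence half of (IV) by pushing cells through $F_2$ with Lemma \ref{Lcell}, (B), and (G). The only cosmetic difference is that the paper packages (V) and (VI) as the single equality $\rlp{(F_1I_0\cup I)} = \wn \cap \rlp{(F_1J_0\cup J)}$, whereas you verify the two inclusions separately.
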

\begin{proof} We will apply Theorem \ref{Kan theorem}.

We can simultaneously show that \eqref{KT5} and \eqref{KT6} hold.
We have
\begin{equation}\label{fizero}
\begin{aligned}
	\rlp{(F_1I_0)} &= U_1^\inv(\rlp{I_0}) \\ &= U_1^\inv(\wm \cap \rlp{J_0}) & \eqref{ST1}\\
	 &= U_1^\inv(\wm) \cap U_1^\inv(\rlp{J_0}) \\ &= U_1^\inv(\wm) \cap \rlp{(F_1 J_0)} & \text{Lemma \ref{rlp adjunction}}
\end{aligned} \end{equation}
and thus
\begin{equation} \label{I equality} \begin{aligned}
	\rlp{(F_1 I_0 \cup I)} & = 
	\rlp{(F_1I_0)} \cap \rlp I \\ &=
	U_1^\inv(\wm) \cap \rlp{(F_1 J_0)} \cap \calL \cap \rlp J & & \eqref{fizero}, \eqref{ST6}  \\ &= 
	U_1^\inv(\wm) \cap \calL \cap \rlp{(F_1 J_0)} \cap \rlp J \\ &=
	\wn \cap \rlp{(F_1 J_0 \cup J)}
\end{aligned} \end{equation}

We now turn to \eqref{KT4}.
For conciseness, write $I' = F_1 I_0 \cup I$, $J' = F_1 J_0 \cup J$; then $F_2I' = F_0I_0 \cup F_2 I$ and likewise for $J$.
Suppose that $f \in J'\text{-cell}$. By Lemma \ref{Lcell}, \[F_2(f) \in (F_2J')\text{-cell} \overset{\eqref{ST2}}{\subset} \calW_{\bP} \cap \llp{(\rlp{(F_2I')})}.\]
Thus, by \eqref{ST7}, $f\in \wn$.
Since $\rlp{I'} \overset{\eqref{I equality}}\subset \rlp{J'}$, we have $\llp{(\rlp{I'})} \supset \llp{(\rlp{J'})}$ and of course $J'\text{-cell} \subset \llp{(\rlp{J'})}$. 
This shows $J'\text{-cell} \subset \llp{(\rlp{I'})}$, hence
\[ (F_1 J_0 \cup J)\text{-cell} \subset \wn \cap \llp{(\rlp{(F_1 I_0 \cup I)})}.\]

We have now established \eqref{KT4}--\eqref{KT6} of Theorem \ref{Kan theorem}; conditions \eqref{KT1}--\eqref{KT3} were assumed (as \ref{ST5}, \ref{ST3}, \ref{ST4}) to be true.
Thus $\bN$ admits the desired cofibrantly generated model structure.
\end{proof}

\begin{remark}
Notice by \cite[2.1.20]{hovey}, that all adjunctions in this theorem statement are Quillen adjunctions.
\end{remark}

The following is a baby version of the above proposition, which we include here for completeness rather than for any further use in this paper.
It originally appeared as \cite[Lemma 8.8]{MR1888677}, and we thank M.\,Johnson for alerting us to this fact.

\begin{corollary}\label{corollary thing}
	Suppose that $i : \bN \subseteq \bP$ is a coreflective full subcategory with $\bN$ and $\bP$ bicomplete.
	Assume that there are sets $I, J \subset \bN$ so that $\bP$ admits the structure of a cofibrantly generated model category with $iI$ (resp. $iJ$) as its set of generating (acyclic) cofibrations, weak equivalences $\calW$, and fibrations $\calF$.
	Then $\bN$ also admits the structure of a cofibrantly generated model category, with $I$ (resp. $J$) the generating (acyclic) cofibrations, weak equivalences $\calW \cap \bN$, and fibrations $\calF \cap \bN$.
\end{corollary}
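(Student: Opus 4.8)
The plan is to deduce the corollary directly from Proposition \ref{squeezy theorem} by degenerating $\bM$ to the terminal category. Concretely, I would take $\bM = \ast$, the one-object, one-morphism category with its trivial model structure, so that $I_0 = J_0 = \emptyset$ and $\wm$ consists of the identity alone; I would let $(F_2, U_2)$ be the coreflection, with $F_2 = i \colon \bN \to \bP$ the inclusion and $U_2 = R \colon \bP \to \bN$ its right adjoint; and I would set $\calL = \calW \cap \bN = i^{-1}(\calW)$. The degenerate triangle is a genuine adjunction over $\ast$: the unique functors $U_0, U_1$ to $\ast$ coincide on composition with $U_2$, and their left adjoints pick out initial objects, which $i$ preserves. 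With these choices $F_1 I_0 = F_0 I_0 = \emptyset$, so the generating (acyclic) cofibrations $F_1 I_0 \cup I$ and $F_0 I_0 \cup F_2 I$ collapse to $I$ and $iI$, and $\wn = (U_1^{-1}\wm)\cap\calL = \calL = \calW\cap\bN$, exactly matching the statement. Hypothesis \eqref{ST1} is the trivial model structure on $\ast$, and \eqref{ST2} is the assumed model structure on $\bP$.

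Several hypotheses are then immediate from the fact that $i$ is a fully faithful, colimit-preserving functor. Hypothesis \eqref{ST7} reads $i^{-1}(\calW) \subset \calW\cap\bN$, which holds by the definition of $\calL$. For \eqref{ST5}, since $U_1^{-1}\wm$ is all of $\bN$, it suffices that $\calW\cap\bN$ have two-out-of-three and be closed under retracts; this follows by applying $i$ and invoking these properties for $\calW$ in $\bP$, using that $i$ carries a retract diagram in $\bN$ to one in $\bP$. For the smallness hypotheses \eqref{ST3} and \eqref{ST4}, I would note that for a domain $A$ of $I$, full faithfulness gives $\bN(A,-) \cong \bP(iA, i(-))$, while $i$ sends an $I$-cell sequence to an $(iI)$-cell sequence by Lemma \ref{Lcell} and preserves its colimit; the smallness of $iA$ relative to $(iI)$-cell guaranteed by \eqref{ST2} then transfers to smallness of $A$ relative to $I$-cell, and similarly for $J$.

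The crux is hypothesis \eqref{ST6}, namely $\rlp I = \calL \cap \rlp J$ with lifting taken in $\bN$, and this is where coreflectivity is essential. Since $i$ is fully faithful, the unit $\id_{\bN} \to Ri$ is a natural isomorphism, so every $f$ in $\bN$ is isomorphic as an arrow to $Rif$. Applying Lemma \ref{rlp adjunction} to $i \dashv R$ with the class $I$ (resp.\ $J$) yields $R^{-1}(\rlp I) = \rlp{(iI)}$ and $R^{-1}(\rlp J) = \rlp{(iJ)}$; combining these with $Rif \cong f$ and the invariance of the lifting property under isomorphism of arrows gives the transfer equivalences $f \in \rlp I \iff if \in \rlp{(iI)}$ and $f \in \rlp J \iff if \in \rlp{(iJ)}$. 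By \eqref{ST2} and the characterization of (acyclic) fibrations in a cofibrantly generated model category, $\rlp{(iI)} = \calW\cap\calF$ and $\rlp{(iJ)} = \calF$ in $\bP$. Hence $f \in \rlp I \iff if \in \calW\cap\calF \iff (if \in \calW \text{ and } if \in \calF) \iff f \in \calL \cap \rlp J$, which is \eqref{ST6}.

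With all hypotheses verified, Proposition \ref{squeezy theorem} produces the cofibrantly generated model structure on $\bN$ with weak equivalences $\calW\cap\bN$, generating cofibrations $I$, and generating acyclic cofibrations $J$. Finally, the fibrations of this structure are $\rlp J$, and the transfer equivalence $f \in \rlp J \iff if \in \calF$ identifies them with $\{\, f \in \bN : if \in \calF \,\} = \calF \cap \bN$, as claimed. I expect step \eqref{ST6} to be the only substantive point; the rest is bookkeeping once one records that the coreflector inverts $i$ on $\bN$.
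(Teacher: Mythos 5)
Your proposal is correct and takes essentially the same route as the paper's proof: both specialize Proposition \ref{squeezy theorem} with $\bM$ the terminal category, $I_0 = J_0 = \varnothing$, and $\calL = \calW \cap \bN$, then verify \eqref{ST6} via Lemma \ref{rlp adjunction} applied to the coreflection together with the natural isomorphism $\id_{\bN} \cong U_2 F_2$ (the paper's $ui \cong \id_{\bN}$), and finally identify the fibrations as $\calF \cap \bN$ by the same transfer of lifting properties. Your explicit checks that $i$ preserves the initial object (so the triangle of adjunctions commutes) and that smallness transfers along $\bN(A,-) \cong \bP(iA, i(-))$ merely spell out what the paper leaves implicit.
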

A prototypical example of this situation is the inclusion of the category of Kelley spaces into the category of all topological spaces; in this setting, this corollary essentially appears in \cite[2.4.22 -- 2.4.23]{hovey}.
Other situations are those in which $i: \bN \to \bP$ is fully faithful and the adjoint functor theorem applies, for instance when $\bN$, $\bP$ are locally-presentable and $i$ preserves all colimits.

\begin{proof}[Proof of Corollary \ref{corollary thing}]
	Let $u: \bP \to \bN$ be the right adjoint to $i: \bN \to \bP$, and let $\bM$ be the terminal category; there is an adjunction $F_1 \colon \bM \rightleftarrows \bN \colon U_1$ where $F_1$ sends the unique object of $\bM$ to the initial object $\varnothing$ of $\bN$.

	Apply Proposition \ref{squeezy theorem} to these adjunctions, with $I_0 = \varnothing = J_0$ and 
	\begin{equation}\label{calL definition}
		\calL = \calW \cap \bN = i^\inv(\calW).
	\end{equation}
	Hypothesis \eqref{ST1} is automatic and \eqref{ST2} holds by assumption.
	Notice that $U_1^\inv(\calW_{\bM}) = \bN$ and $U_0^\inv(\calW_{\bM}) = \bP$.
	Hypothesis \eqref{ST3} holds since the domains of elements of $I$ are already small relative to the larger\footnote{Lemma \ref{Lcell}} class $iI\text{-cell}$, hence to the class $I\text{-cell}$; in a similar way we obtain \eqref{ST4}.
	Hypothesis \eqref{ST5} follows from the corresponding properties of $\calW$.

	Since $i$ is fully faithful, $ui \cong \id_{\bN}$; thus if $\calK$ is any class of maps in $\bN$ 
	\begin{equation}\label{K in this instance}
	\rlp{(i\calK)} \cap \bN = i^\inv \left(\rlp{(i\calK)}\right) \overset{\ref{rlp adjunction}}= i^\inv \left( u^\inv \left( \rlp{\calK} \right)\right)= (ui)^\inv \rlp{\calK}= \rlp{\calK}.
	\end{equation}
	So
	\[
		\rlp{I} \overset{\eqref{K in this instance}}= \rlp{(iI)} \cap \bN = \calW \cap \rlp{(iJ)} \cap \bN = \calW \cap \bN \cap \rlp{(iJ)} \cap \bN \overset{\eqref{K in this instance}}=  \calL \cap \rlp{J}
	\]
	and we have established \eqref{ST6}. 
	Finally, \eqref{ST7} holds since 
	\begin{equation}\label{weak equivalence line} i^\inv(\calW) = \calW \cap \bN = (\calW \cap \bN) \cap \bN = \calL \cap U_1^\inv (\calW_{\bM}) = \calW_{\bN} \end{equation}
	since $U_1^\inv (\calW_{\bM}) = \bN$.

	Now we can apply Proposition \ref{squeezy theorem} to obtain a model structure on $\bN$. 
	The class of weak equivalences in $\bN$ is $\calW \cap \bN$ by \eqref{weak equivalence line}.
	We know that the fibrations for $\bN$ are
	\[
	\rlp{J} \overset{\eqref{K in this instance}}= \rlp{(iJ)} \cap \bN = \calF \cap \bN,
	\]
	which completes the proof.
\end{proof}

\section{Graph groupoids, pasting schemes, generalized props}\label{section graphs}

In this section we recall some concepts and examples from \cite{yj}, though we often use the same terminology for things that are much less general in the present paper.

Given a set $\fC$, a \emph{profile} $\uc = (c_1, \dots, c_n)$ is simply an ordered list of elements in $\fC$. A \emph{biprofile} is a pair of profiles, written alternately as
\[
	\binom{d_1, \dots, d_m}{c_1, \dots, c_n} = \dc = \dch = (c_1, \dots, c_n;d_1, \dots, d_m),
\]
where each $c_i$ and each $d_k$ are in $\fC$.

An \emph{$\fC$-colored graph} $G$ consists of
\begin{itemize}
\item a directed graph $G$ with half-edges which has no directed cycles, 
\item a coloring function $\xi$ from the set of edges of $G$ to $\fC$,
\item orderings on the inputs and outputs of the graph
\begin{align*}
	\ord_i : \{ 1, \dots, n \} &\overset{\cong}\to \inp G \\
	\ord_o : \{ 1, \dots, m \} &\overset{\cong}\to \out G,
\end{align*}
and
\item orderings on the inputs and outputs of each vertex $v\in \vertex(G)$
\begin{align*}
	\ord_i^v : \{ 1, \dots, n_v \} &\overset{\cong}\to \inp v \\
	\ord_o^v : \{ 1, \dots, m_v \} &\overset{\cong}\to \out v.
\end{align*}
\end{itemize}

\begin{example}
	Given a biprofile $\dch = (c_1, \dots, c_n; d_1, \dots, d_m)$ with $c_i, d_j \in \fC$, the \emph{standard corolla}
	$C_{\dch}$ is the graph with one vertex $v$, half-edges $1, \dots, n+m$ with
	\begin{align*}
		\ord_i =  \ord_i^v : \{ 1, \dots, n \} &\overset{=}\to \{1, \dots, n\} = \inp G = \inp v\\
		\ord_o = \ord_o^v: \{ 1, \dots, m \} &\overset{+n}\to \{n+1, \dots, n+m\} = \out G = \out v
	\end{align*}
	and 
	\[
		\xi(i) = \begin{cases}
			c_i & 1\leq i \leq n \\
			d_{i-n} & n+1 \leq i \leq n+m.
		\end{cases}
	\]
\end{example}

A \emph{strict isomorphism} between $\fC$-colored graphs preserves all structure, while a \emph{weak isomorphism} does not necessarily preserve the orderings.
The category of (wheel-free) graphs along with weak isomorphism gives us our first example of a \emph{graph groupoid}, which we denote by $\Gr^\uparrow$.
We are also interested in the following full subgroupoids of $\Gr^\uparrow$:
\begin{itemize}
	\item The subgroupoid $\Gr^\uparrow_\text{c}$ whose objects are the \emph{connected} graphs.
	\item The subgroupoid $\Gr^\uparrow_\text{di}$ whose objects are the \emph{simply connected} graphs.
\end{itemize}
If we fix a set of colors $\fC$, then we will write $\Gr^\uparrow(\fC) \subset \Gr^\uparrow$ (resp. $\Gr^\uparrow_\text{c}(\fC) \subset \Gr^\uparrow_\text{c}$ and $\Gr^\uparrow_\text{di}(\fC) \subset \Gr^\uparrow_\text{di}$) for the full subgroupoids of $\fC$-colored graphs. 
For a fixed biprofile $\dch = (c_1, \dots, c_n; d_1, \dots, d_m)$ with $c_i, d_j \in \fC$, there is a (non-full) subgroupoid $\Gr^\uparrow(\fC)\dc \subset \Gr^\uparrow(\fC) \subset \Gr^\uparrow$ with
\begin{itemize}
	\item objects those graphs with $\xi(\ord_i(s)) = c_s \in \fC$ and $\xi(\ord_o(t)) = d_t \in \fC$,
	\item morphisms the \emph{strict} isomorphisms.
\end{itemize}
The use of strict isomorphism guarantees preservation of the colors of the inputs and outputs.
There are analogously defined supgroupoids $\Gr^\uparrow_{\text{c}}(\fC)\dc \subset \Gr^\uparrow_{\text{c}}(\fC) \subset \Gr^\uparrow_{\text{c}}$ and $\Gr^\uparrow_{\text{di}}(\fC)\dc \subset \Gr^\uparrow_{\text{di}}(\fC) \subset \Gr^\uparrow_{\text{di}}$.

Each of $\Gr^\uparrow(\fC), \Gr^\uparrow_{\text{c}}(\fC),$ and $\Gr^\uparrow_{\text{di}}(\fC)$ is a $\fC$-colored \emph{pasting scheme} \cite[8.2]{yj} for any color set $\fC$, which essentially means that they are closed under the operation of \emph{graph substitution}. 

\begin{remark}\label{remark strict iso}
We will often work with strict isomorphism classes of graphs instead of the graphs themselves; this assumption guarantees that the above categories of graphs are \emph{small} categories.
We will also need this in section \ref{left adjoints} to ensure that the extension category has small hom sets.
\end{remark}

Let $\fC$ be a set of colors, and let $\Gr$ be one of $\Gr^\uparrow$, $\Gr^\uparrow_\text{c}$, or $\Gr^\uparrow_\text{di}$.
A simplicial $\Gr(\fC)$-prop consists of the data of
\begin{itemize}
	\item a family of simplicial sets
	\[
		\calP\dc \in \sSet,
	\]
	one for each biprofile $\dch$ in $\fC$; 
	\item unit elements $\id_c \in \calP \binom{c}{c}_0$; and
	\item composition maps 
	\[
		\gamma_G : \calP[G] \to \calP \binom{\xi \out G}{\xi \inp G}
	\]
	for each $\fC$-colored graph $G\in \Gr(\fC)$, where
\[ 
	\calP[G] = \prod_{v\in \vertex(G)} \calP \binom{\xi\out(v)}{\xi\inp(v)}
\]
is the graph $G$ with each vertex decorated by some element in the family.
\end{itemize}
These data should satisfy appropriate identity, associativity, and equivariance properties; we refer the reader to \cite{hrybook,jy} or \cite[10.39]{yj} for precise definitions.
We will frequently write $\col(\calP) = \fC$ for the set of colors of $\calP$.
A morphism $\calP \to \calQ$ from a $\Gr(\fC)$-prop to a $\Gr(\fD)$-prop consists of a set map $f:\fC \to \fD$ and a family of morphisms
\[
	\left\{ \calP \dc \to \calQ \fdc \right\}
\]
which are compatible with the composition maps and unit elements.
Let $\sProp^{\Gr}$ be the category, fibered over $\Set$, whose objects are simplicial $\Gr(\fC)$-props (as $\fC$ varies) and whose morphisms are as above. We shall call objects in $\sProp^{\Gr}$ simply `simplicial $\Gr$-props'.
\begin{itemize}
	\item Objects of $\sProp^{\Gr^\uparrow}$ are called simplicial \emph{props}, and we write $\sProp$ for this category.
	\item Objects of $\sProp^{\Gr^\uparrow_{\text{c}}} = \sProperad$ are called simplicial \emph{properads}.
	\item Objects of $\sProp^{\Gr^\uparrow_{\text{di}}} = \sDioperad$ are called simplicial \emph{dioperads}.
\end{itemize}

\section{Local equivalences and liftings}\label{section local}

Consider one of the graph groupoids $\Gr$ discussed above, and let $\bN = \sProp^{\Gr}$ be the category of simplicial props (for $\Gr = \Gr^\uparrow$), simplicial properads (for $\Gr = \Gr^\uparrow_{\text{c}}$), or simplicial dioperads (for $\Gr = \Gr^\uparrow_{\text{di}}$). 
Let
$\calL =  \calL_\bN \subset \bN$ denote the subcategory of \emph{local equivalences}, i.e. those maps $f : \calP \to \calQ$ so that for every biprofile $\dc$ of $\calP$, the map
\[
	f_{\dch} : \calP\dc \to \calQ \fdc
\]
is a weak equivalence in $\sSet$.

\begin{remark}\label{not twoofthree remark}
The subcategory $\calL$ does not satisfy the two out of three property. 
The functor $\Cat \hookrightarrow \sCat \to \bN$ allows us to regard $\Cat$ as a full subcategory of $\bN$. Then $\calL \cap \Cat$ is the class of full and faithful functors, which does not satisfy two out of three.
For another example, if $\varnothing$ is the initial object of $\bN$ (with $\col(\varnothing) = \varnothing)$), then, for any $\calR$ with $\calR\binom{\varnothing}{\varnothing} = \varnothing$, the map $\varnothing \to \calR$ is in $\calL$.
We then have that the triple
\[
	\varnothing \to \calP \overset{f}\to \calQ
\]
violates two out of three whenever $\calP\binom{\varnothing}{\varnothing} = \calQ\binom{\varnothing}{\varnothing} = \varnothing$ and $f\notin \calL$.
\end{remark}

On the other hand, $\calL$ is closed under composition and, if we have
\[
	\calP \overset{g}\to  \calQ \overset{f}\to \calR
\]
with $f$ and $f\circ g$ both in $\calL$, then $g\in \calL$ as well.

Furthermore, $\calL$ is closed under retracts.
Suppose we are given a retraction diagram
\[ \begin{tikzcd}
X_1  \rar{i_1} \arrow[rr,bend left, "\id"]  \dar{f} & Y_1 \rar{r_1}  \dar{g} & X_1 \dar{f}  \\
X_2  \rar{i_2} \arrow[rr,bend right, "\id"] & Y_2 \rar{r_2} & X_2
\end{tikzcd} \]
with $g\in \calL$. Then, for each biprofile $\dc$ in $X_1$ we have a retraction diagram
\[ \begin{tikzcd}
X_1\dc  \rar{i_1} \arrow[rr,bend left, "\id"]  \dar{f} & Y_1\binom{i_1\ud}{i_1\uc} \rar{r_1}  \dar{g}[swap]{\simeq} & X_1\dc \dar{f}  \\
X_2\fdc  \rar{i_2} \arrow[rr,bend right, "\id"] & Y_2\binom{gi_1\ud}{gi_1\uc}  \rar{r_2} & X_2\fdc
\end{tikzcd} \]
in $\sSet$. Since weak equivalences in $\sSet$ are closed under retracts,
\[
	f_{\dch} : X_1\dc \to X_2\fdc
\]
is a weak equivalence 
for all $\dc$, hence $f\in \calL$.

We now begin to work towards Theorem \ref{lemma twoofthree}, where we address the defect of $\calL$ noted in Remark \ref{not twoofthree remark}: if $g$ and $f\circ g$ are not just in $\calL$ but are also \emph{categorical equivalences} (that is, satisfy (W2) of Definition \ref{WEandfibrations}), then $f$ is also in $\calL$.
To establish this fact, we need to show that isomorphisms in $\pi_0\calP$ act on the components $\calP\dc$ via weak equivalences.
Write
\[
	U_1 : \bN \to \sCat
\]
for the forgetful functor, with $\Ob(U_1(\calP)) = \col(\calP)$ and $U_1(\calP)(a,b) = \calP\binom{b}{a}$.

\begin{lemma}
Let $\calP \in \bN$ and 
suppose that 
$a$ and $a'$ are vertices of $\calP\binom{b'}{b} = U_1\calP(b, b')$ which
represent the same class in $\pi_0 \calP(b,b')$. 
Consider the maps
	\begin{align*}
	(-\circ_i a), (-\circ_i a') \colon 
	\calP\binom{\ud}{c_1,\dots, c_{i-1},b',c_{i+1}, \dots, c_n} &\to 
	\calP\binom{\ud}{c_1,\dots, c_{i-1},b,c_{i+1}, \dots, c_n}  \\
	(a \circ_j -), (a' \circ_j -) \colon 
	\calP\binom{d_1,\dots, d_{j-1},b,d_{j+1}, \dots, d_m}{\uc} &\to 
	\calP\binom{d_1,\dots, d_{j-1},b',d_{j+1}, \dots, d_m}{\uc}.
\end{align*}
Then we have homotopies $|{-}\circ_i a | \simeq |{-}\circ_i a'|$ and $|a \circ_j {-}| \simeq | a' \circ_j {-}|$ after taking geometric realization.
\end{lemma}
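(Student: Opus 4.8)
The plan is to exhibit composition with a fixed element as the restriction of a single genuinely two-variable simplicial map, and then to transport a path in the geometric realization joining $a$ to $a'$ through that map in order to build the desired homotopy.

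First I would identify the operation $-\circ_i a$ with a structure map of the simplicial $\Gr(\fC)$-prop $\calP$. Let $G$ be the two-vertex graph obtained by grafting the output (colored $b'$) of a $\binom{b'}{b}$-corolla into the $i$-th input of the corolla with profile $\binom{\ud}{c_1,\dots,c_{i-1},b',c_{i+1},\dots,c_n}$. This graph has a single internal edge, so it is connected and simply connected, hence lies in each of $\Gr^\uparrow$, $\Gr^\uparrow_{\text{c}}$, and $\Gr^\uparrow_{\text{di}}$; its composition map is a morphism of simplicial sets
\[
	\gamma_G : \calP\binom{\ud}{c_1,\dots,c_{i-1},b',c_{i+1},\dots,c_n} \times \calP\binom{b'}{b} \longrightarrow \calP\binom{\ud}{c_1,\dots,c_{i-1},b,c_{i+1},\dots,c_n}.
\]
By the very definition of $\circ_i$ as a decorated-graph composition, $-\circ_i a = \gamma_G(-,a)$ and $-\circ_i a' = \gamma_G(-,a')$; that is, each is $\gamma_G$ composed with the map $\id \times a$ (resp. $\id \times a'$) picking out the $0$-simplex $a$ (resp. $a'$) in the second factor.

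Next I apply geometric realization. Since $|{-}|$ preserves finite products (Milnor), realizing $\gamma_G$ produces a continuous map
\[
	|\gamma_G| : \bigl| \calP\binom{\ud}{c_1,\dots,b',\dots,c_n} \bigr| \times \bigl| \calP\binom{b'}{b} \bigr| \longrightarrow \bigl| \calP\binom{\ud}{c_1,\dots,b,\dots,c_n} \bigr|,
\]
and $|{-}\circ_i a|$, $|{-}\circ_i a'|$ are precisely its restrictions to the two slices determined by the points $|a|, |a'| \in |\calP\binom{b'}{b}|$. Because $a$ and $a'$ represent the same class in $\pi_0\calP(b,b') = \pi_0\bigl(\calP\binom{b'}{b}\bigr)$, the points $|a|$ and $|a'|$ lie in the same path component of the realization, so I may choose a path $\sigma : [0,1] \to |\calP\binom{b'}{b}|$ with $\sigma(0) = |a|$ and $\sigma(1) = |a'|$. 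Then $H(x,t) = |\gamma_G|\bigl(x,\sigma(t)\bigr)$ is a homotopy from $|{-}\circ_i a|$ to $|{-}\circ_i a'|$. The claim for $a\circ_j{-}$ is entirely symmetric: I would instead use the two-vertex graph grafting the $j$-th output (colored $b$) of the corolla with profile $\binom{d_1,\dots,d_{j-1},b,d_{j+1},\dots,d_m}{\uc}$ into the input of a $\binom{b'}{b}$-corolla, and compose the realized structure map with the same path $\sigma$.

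The only delicate points are bookkeeping: checking that the grafting graph really yields the asserted source and target profiles and that restricting $\gamma_G$ along $\id\times a$ recovers $-\circ_i a$, together with the invocation of product-preservation of $|{-}|$ so that a path in the second factor can legitimately be fed into $|\gamma_G|$. I expect the main conceptual step to be the first one, namely recognizing one-sided composition as a slice of a two-variable simplicial map; once that is in place, the homotopy is produced formally by sliding the chosen path $\sigma$ through $|\gamma_G|$.
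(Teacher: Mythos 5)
Your proposal is correct and is essentially the paper's own argument: the paper likewise realizes the properadic composition (using that geometric realization preserves finite products, so $|\calP|$ is a prop in topological spaces) and slides a path from $|a|$ to $|a'|$ through the realized structure map via $H(x,t) = x \circ_i \alpha(t)$. Your version merely makes explicit the two-vertex grafting graph $G$ whose structure map is being realized, which the paper leaves implicit.
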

\begin{proof}
We prove the statements for precomposition; postcomposition follows similarly.
Since geometric realization commutes with finite products, $|\calP|$ is a $\Gr$-prop enriched in topological spaces with structure maps given by
\[
	\gamma^{|\calP|}_G \colon \left|\calP\right|[G] = \left|\calP[G]\right| \overset{\left|\gamma^{\calP}_G\right|}{\longrightarrow} \left|\calP\dc\right|.
\]
Let $\alpha$ be a path from $a$ to $a'$ in $| \calP\binom{b'}{b} |$, that is, a continuous map $I \to | \calP\binom{b'}{b} |$ with $\alpha(0) = a$ and $\alpha(1) = a'$.
The map
\[
	H : \left| \calP\binom{\ud}{c_1,\dots, c_{i-1},b',c_{i+1}, \dots, c_n} \right| \times I  \to \left| \calP\binom{\ud}{c_1,\dots, c_{i-1},b,c_{i+1}, \dots, c_n} \right|
\]
with $H(x,t) = x \circ_i \alpha(t)$ is a homotopy from $|{-}\circ_i a|$ to $|{-}\circ_i a'|$.
\end{proof}

\begin{lemma}
Let $\calP \in \bN$, and
suppose that $\alpha : b \to b'$ is an isomorphism in $\pi_0 U_1 \calP$.
If $a \in \calP\binom{b'}{b}_0 = U_1\calP(b, b')_0$ is a representative for $\alpha$, then the maps
\begin{align*}
	{-}\circ_i a \colon 
	\calP\binom{\ud}{c_1,\dots, c_{i-1},b',c_{i+1}, \dots, c_n} &\to 
	\calP\binom{\ud}{c_1,\dots, c_{i-1},b,c_{i+1}, \dots, c_n}  \\
	a \circ_j {-} \colon 
	\calP\binom{d_1,\dots, d_{j-1},b,d_{j+1}, \dots, d_m}{\uc} &\to 
	\calP\binom{d_1,\dots, d_{j-1},b',d_{j+1}, \dots, d_m}{\uc}
\end{align*}
are weak equivalences in $\sSet$.
\end{lemma}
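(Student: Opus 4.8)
The plan is to show that $-\circ_i a$ becomes a homotopy equivalence after geometric realization, by exhibiting an explicit homotopy inverse coming from a representative of $\alpha^{-1}$; since the geometric realization of a simplicial set is a CW complex, a map of simplicial sets whose realization is a homotopy equivalence is a weak equivalence, and this will give the claim. As $\alpha \colon b \to b'$ is an isomorphism in $\pi_0 U_1\calP$, its inverse $\alpha^{-1} \colon b' \to b$ is represented by some $0$-simplex $a' \in \calP\binom{b}{b'}_0$. The defining relations $\alpha^{-1}\circ\alpha = \id_b$ and $\alpha\circ\alpha^{-1} = \id_{b'}$ say precisely that the unary composites $a'\circ a \in \calP\binom{b}{b}$ and $a\circ a' \in \calP\binom{b'}{b'}$, formed using the composition in $U_1\calP$, lie in the same path components as $\id_b$ and $\id_{b'}$ respectively.

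The heart of the argument is to identify the two composites of precomposition operations. Using the associativity axiom of the $\Gr$-prop structure I would rewrite $(-\circ_i a')\circ(-\circ_i a) = -\circ_i (a\circ a')$ and $(-\circ_i a)\circ(-\circ_i a') = -\circ_i (a'\circ a)$, where in each case the inner composite is the unary composition in $U_1\calP$. Because $a\circ a'$ lies in the same $\pi_0$-class as $\id_{b'}$, the previous lemma gives $|{-}\circ_i (a\circ a')| \simeq |{-}\circ_i \id_{b'}|$, and the unit axiom identifies $-\circ_i \id_{b'}$ with the identity map. Hence $|{-}\circ_i a'|\circ|{-}\circ_i a| \simeq \id$ after realization, and symmetrically, using $a'\circ a \simeq \id_b$, one obtains $|{-}\circ_i a|\circ|{-}\circ_i a'| \simeq \id$.

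This exhibits $|{-}\circ_i a|$ and $|{-}\circ_i a'|$ as mutually inverse homotopy equivalences, so $-\circ_i a$ is a weak equivalence in $\sSet$; the postcomposition map $a\circ_j {-}$ is handled by the mirror-image argument, with homotopy inverse $a'\circ_j {-}$. I expect the only real obstacle to be bookkeeping: one must be careful about the direction of the unary composites, so that $a\circ a'$ and $a'\circ a$ genuinely represent the identities at $b'$ and at $b$ and not the reverse, and one must check that the instances of associativity and of the unit law being invoked are exactly those supplied by the prop axioms, so that the hypotheses of the previous lemma are met. Once these identifications are in place, the conclusion is immediate.
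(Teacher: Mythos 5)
Your proposal is correct and matches the paper's own proof essentially step for step: the paper likewise picks a representative $a'$ of $\alpha^{-1}$, observes $[a\circ a'] = [\id_{b'}]$ and $[a'\circ a] = [\id_b]$, uses associativity and the unit law to identify the composites of the precomposition maps with $-\circ_i(a\circ a')$ and $-\circ_i(a'\circ a)$, and then invokes the preceding lemma to conclude that $|{-}\circ_i a|$ and $|{-}\circ_i a'|$ are mutually inverse homotopy equivalences, hence that $-\circ_i a$ is a weak equivalence. The direction bookkeeping you flag is indeed the only delicate point (the paper's displayed composites appear with the opposite order from yours, a matter of convention or a harmless typo), and it does not affect the conclusion.
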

\begin{proof}
We only prove the first statement; the second is similar.
Write 
\[X = \calP\binom{\ud}{c_1,\dots, c_{i-1},b,c_{i+1}, \dots, c_n}  \text{ and } X' = \calP\binom{\ud}{c_1,\dots, c_{i-1},b',c_{i+1}, \dots, c_n}. \] 
Let $a' \in U_1\calP(b',b)_0$ be a vertex so that $\alpha^{-1} = [a']$.
Then $[a \circ a'] = [\id_{b'}] \in \pi_0\calP\binom{b'}{b'}$ and $[a' \circ a] = [\id_{b}] \in \pi_0\calP\binom{b}{b}$.
By the previous lemma, $|{-}\circ_i (a'\circ a)| \simeq |{-}\circ_i {\id_{b}}| = \id_{|X|}$ and $|{-}\circ_i (a\circ a')|\simeq \id_{|X'|}$.
But
\begin{align*} \id_{|X|} \simeq |{-}\circ_i (a'\circ a)| &= |{-} \circ_i a'| \circ |{-}\circ_i a| \\
\id_{|X'|} \simeq |{-}\circ_i (a\circ a')| &= |{-} \circ_i a| \circ |{-}\circ_i a'| \end{align*}
so $|{-} \circ_i a|$ and $|{-}\circ_i a'|$ are homotopy inverses for each other. Thus $({-} \circ_i a)$ is a weak equivalence in $\sSet$.
\end{proof}

\begin{corollary}\label{change of objects}
Let $\dc$, $\binom{\ud'}{\uc'}$ be biprofiles, and suppose we have isomorphisms
	\begin{align*}
	\alpha_i &\in  \pi_0U_1\calP(c_i', c_i) \\
	\beta_j &\in \pi_0U_1\calP(d_j, d_j').
\end{align*}
By choosing representatives $a_i \in \alpha_i$ and $b_j\in \beta_j$, we have a map
\[
	(\underline a; \underline b)
	\colon  \calP\dc \to \calP\binom{\ud'}{\uc'}; 
\]
this map is a weak equivalence in $\sSet$. \qed
\end{corollary}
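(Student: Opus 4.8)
The plan is to realize the simultaneous substitution $(\underline a; \underline b)$ as a finite composite of the one-slot change-of-object maps treated in the previous lemma, and then to invoke closure of weak equivalences in $\sSet$ under composition. The strategy is to alter the colors one slot at a time: first replace each input color $c_i$ by $c_i'$ by precomposing with $a_i$, and then replace each output color $d_j$ by $d_j'$ by postcomposing with $b_j$. This yields a chain of $n+m$ maps passing through intermediate biprofiles, whose total composite carries $\calP\dc$ to $\calP\binom{\ud'}{\uc'}$.

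The first step is to verify that the previous lemma applies at every stage of this chain. This is immediate from the hypotheses: the representative $a_i$ lies in $\calP\binom{c_i}{c_i'} = U_1\calP(c_i', c_i)$ and represents the isomorphism $\alpha_i$, while $b_j$ lies in $\calP\binom{d_j'}{d_j} = U_1\calP(d_j, d_j')$ and represents $\beta_j$. Whether such an element represents an isomorphism in $\pi_0 U_1\calP$ depends only on the colors being interchanged and not on the ambient biprofile, so at each stage the previous lemma applies verbatim and tells us that the corresponding precomposition ${-}\circ_i a_i$, respectively postcomposition $b_j \circ_j {-}$, is a weak equivalence of simplicial sets.

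The second step is to identify this composite with the map $(\underline a; \underline b)$ of the statement. Because precomposition in distinct input slots, postcomposition in distinct output slots, and precomposition versus postcomposition all commute with one another --- this being a consequence of the associativity and equivariance axioms of a $\Gr$-prop (equivalently, of the graph-substitution description of $(\underline a; \underline b)$ via a single structure map $\gamma_G$) --- the composite is independent of the chosen order and coincides with substituting all of the $a_i$ and $b_j$ at once. Since weak equivalences in $\sSet$ satisfy two-out-of-three and are in particular closed under composition, this composite of $n+m$ weak equivalences is again a weak equivalence, establishing the claim.

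I expect no genuine obstacle here: all of the homotopical content was already isolated in the two preceding lemmas, so what remains is purely the structural bookkeeping of the second step, namely confirming that iterating the one-slot substitutions reproduces the simultaneous map $(\underline a; \underline b)$. This is a formal consequence of the defining axioms and requires no new idea.
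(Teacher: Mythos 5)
Your proposal is correct and is exactly the argument the paper intends: the corollary is stated with an immediate \qed precisely because $(\underline a;\underline b)$ factors as a composite of the one-slot precomposition and postcomposition maps of the preceding lemma, each a weak equivalence, and weak equivalences in $\sSet$ are closed under composition. Your additional care in checking that the iterated one-slot substitutions agree with the simultaneous substitution (via associativity of the properad structure maps) is the only bookkeeping the paper leaves tacit, and you handle it correctly.
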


The following verifies \eqref{ST5} of Theorem \ref{squeezy theorem} in the proof of the main theorem (using $\Gr = \Gr^\uparrow_\text{c}$ in \ref{properadstheorem} and $\Gr = \Gr^\uparrow_\text{di}$ in \ref{dioperadstheorem}).

\begin{theorem}\label{lemma twoofthree}
Let $\bN = \sProp^{\Gr}$ be the category of simplicial props, simplicial properads, or simplicial dioperads.
	Suppose that we have two morphisms
	\[
	\calP \overset{g}\to  \calQ \overset{f}\to \calR
    \] of $\bN$. 
    If $g$ and $f\circ g$ are in 
    \[ \wn = (U_1^\inv\calW_{\sCat}) \cap \calL, \]
    then so is $f$.
    Consequently, $\wn$ satisfies the two out of three property and is closed under retracts.
\end{theorem}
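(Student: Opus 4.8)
The plan is to check the two conditions defining $\wn = (U_1^\inv\calW_{\sCat})\cap\calL$ for $f$ separately. Membership $f\in U_1^\inv\calW_{\sCat}$ is immediate: since $\wn\subseteq U_1^\inv\calW_{\sCat}$, both $g$ and $fg$ lie in $U_1^\inv\calW_{\sCat}$, and this class satisfies two out of three (the weak equivalences of $\sCat$ do, and $U_1$ is a functor), so $f\in U_1^\inv\calW_{\sCat}$ as well. All the real work goes into proving $f\in\calL$, namely that $f\colon\calQ\dc\to\calR\fdc$ is a weak equivalence of simplicial sets for \emph{every} biprofile $\dc$ of $\calQ$; this is exactly the implication that $\calL$ fails to satisfy on its own (Remark \ref{not twoofthree remark}), and the point is to repair it using the essential surjectivity of $\pi_0 U_1 g$ that comes for free from $g\in U_1^\inv\calW_{\sCat}$.

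First I would settle the biprofiles of $\calQ$ of the form $\binom{g\ud'}{g\uc'}$ for a biprofile $\binom{\ud'}{\uc'}$ of $\calP$: here the factorization $\calP\binom{\ud'}{\uc'}\to\calQ\binom{g\ud'}{g\uc'}\to\calR\binom{fg\ud'}{fg\uc'}$ together with $g,fg\in\calL$ makes the first two maps weak equivalences, so $f$ on this component is one by two out of three in $\sSet$. For a general biprofile $\dc=(c_1,\dots,c_n;d_1,\dots,d_m)$ of $\calQ$, essential surjectivity of $\pi_0U_1 g$ provides colors $c_i',d_j'$ of $\calP$ and isomorphisms $gc_i'\cong c_i$ and $d_j\cong gd_j'$ in $\pi_0U_1\calQ$. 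Choosing representatives and applying Corollary \ref{change of objects} gives a change-of-objects weak equivalence $\Phi_\calQ\colon\calQ\dc\to\calQ\binom{g\ud'}{g\uc'}$; applying $f$ to those same representatives and invoking Corollary \ref{change of objects} once more yields a weak equivalence $\Phi_\calR\colon\calR\fdc\to\calR\binom{fg\ud'}{fg\uc'}$. These assemble into a square
\[ \begin{tikzcd}
	\calQ\dc \rar{\Phi_\calQ} \dar{f} & \calQ\binom{g\ud'}{g\uc'} \dar{f} \\
	\calR\fdc \rar{\Phi_\calR} & \calR\binom{fg\ud'}{fg\uc'}
\end{tikzcd} \]
whose right vertical is a weak equivalence by the case already settled. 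As $\Phi_\calQ$ and $\Phi_\calR$ are weak equivalences, two out of three in $\sSet$ then forces the left vertical $f\colon\calQ\dc\to\calR\fdc$ to be a weak equivalence, and since $\dc$ was arbitrary this yields $f\in\calL$.

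The step I expect to be the main obstacle is the commutativity of that square. Its horizontal maps are defined by composing with the chosen representatives $a_i,b_j$ in $\calQ$ and with $f(a_i),f(b_j)$ in $\calR$, so commutativity is precisely the assertion that the prop morphism $f$ intertwines these composition operations; this is a consequence of $f$ being compatible with the structure maps $\gamma_G$ and sending $a_i\mapsto f(a_i)$, $b_j\mapsto f(b_j)$. Along the way one must also note that $f(a_i),f(b_j)$ genuinely represent isomorphisms in $\pi_0U_1\calR$, which holds because $\pi_0U_1 f$ is a functor and hence preserves invertibility. Once $f\in\wn$ is established, the final sentence follows formally: $\wn$ is closed under composition and under retracts because both $\calL$ and $U_1^\inv\calW_{\sCat}$ are (the former shown earlier in this section, the latter since $\calW_{\sCat}$ enjoys these properties and $U_1$ is a functor); the remaining instance of two out of three, that $f,fg\in\wn$ implies $g\in\wn$, combines the cancellation property of $\calL$ recorded above with two out of three for $U_1^\inv\calW_{\sCat}$; and the instance $g,fg\in\wn\Rightarrow f\in\wn$ is exactly what was just proved.
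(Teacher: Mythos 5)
Your proof is correct and follows essentially the same route as the paper's: two out of three for $U_1^\inv\calW_{\sCat}$ handles the categorical part, and the local part uses essential surjectivity of $\pi_0U_1(g)$, the change-of-objects weak equivalences of Corollary \ref{change of objects}, and two out of three in $\sSet$ applied to the same commuting square (merely transposed, with the paper's bottom-row factorization appearing as your ``already settled'' case). Your explicit verification that the square commutes and that $f(a_i), f(b_j)$ remain isomorphisms is a point the paper passes over silently, so no gap there.
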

\begin{proof}
Note that $U_1^\inv\calW_{\sCat}$ satisfies the two out of three property, so we already know that $f\in U_1^\inv\calW_{\sCat}$. 

It remains to show that $f\in \calL$.
Let $\dch = (c_1, \dots, c_n; d_1, \dots, d_m)$ be any biprofile in $\calQ$.
Since $g\in U_1^\inv\calW_{\sCat}$, the functor $\pi_0U_1(g) : \pi_0U_1 \calP \to \pi_0U_1 \calQ$ is essentially surjective. Thus, we can find a biprofile $(\uc'; \ud') = (c_1', \dots, c_n'; d_1', \dots, d_m')$ of $\calP$ along with isomorphisms 
\begin{align*}
	[a_i]=\alpha_i : g(c_i') &\to c_i \\
	[b_j]=\beta_j : d_j &\to g(d_j')
\end{align*}
in $\pi_0U_1 \calQ$.
We then have a diagram
\[ \begin{tikzcd} &
\calQ\dc \rar{f_{\dch}} \dar{(\underline a; \underline b)} &
\calR\fdc \dar{(\underline{fa}; \underline{fb})} 
\\
\calP \binom{\ud'}{\uc'} \rar{g_{(\uc'; \ud')}}
& \calQ\binom{g\ud'}{g\uc'} \rar{f_{(g\uc'; g\ud')}} & 
\calR\binom{fg\ud'}{fg\uc'}
\end{tikzcd} \]
The map $f_{(g\uc'; g\ud')}$ is a weak equivalence by two out of three on $\sSet$ and the vertical maps are weak equivalences by Corollary \ref{change of objects}.
Since the square commutes, $f_{\dch}$ is a weak equivalence as well. But $\dch$ was arbitrary, so $f\in \calL$.
\end{proof}

\subsection{Local liftings}

Our next goal is Proposition \ref{J liftings}, which  characterizes maps satisfying \eqref{F1} from Definition \ref{WEandfibrations} via a lifting property. 
We will also characterize maps which satisfy both \eqref{F1} and \eqref{W1}.

\begin{definition}\label{GNM characterization}
	For $n,m \geq 0$, let $\calG_{n,m} : \sSet \to \bN$ be the functor characterized by the property that
	\[
		\Hom_{\bN} (\mathcal{G}_{n,m}[X], \calP) = \left\{ \dch, f : |\uc| = n, |\ud| = m, f \in \Hom_{\sSet} \left(X, \calP \dc\right) \right\}.
	\]
\end{definition}

The following lemma says that these functors $\calG_{n,m}$ exist; its proof should be comfortable for any reader acquainted with the construction of $\Gamma(C_{(n;m)})$ from \cite[Definition 5.7]{hrybook}. When $\bN$ is the category of simplicial properads, these functors appeared previously in \cite{hackneyrobertson2}.

\begin{lemma}
Let $\Gr \in \{ \Gr^\uparrow, \Gr^\uparrow_{\textnormal{c}}, \Gr^\uparrow_{\textnormal{di}} \}$ and $n,m \geq 0$.
Then there is a functor $\calG_{n,m} : \sSet \to \sProp^{\Gr} = \bN$ as in Definition \ref{GNM characterization}. 
\end{lemma}

We require a bit of terminology that we will not use elsewhere in this paper, and which will be confined to the proof. 

\begin{proof}
Suppose that $\fC$ is a set, and let 
$\sProp^{\Gr(\fC)} \subseteq \bN$ denote the category with
\begin{itemize}
	\item objects those $\calP \in \bN$ with $\col(\calP) = \fC$, and
	\item morphisms those maps which are the identity on color sets.
\end{itemize}
There is a functor which picks out the underlying family of simplicial sets
\begin{align*}
	U : \sProp^{\Gr(\fC)} &\to \prod_{b,a \geq 0} \prod_{\fC^b \times \fC^a} \sSet
	\\
	\calP & \mapsto \left\{ \calP\dc \right\}.
\end{align*}
It admits a left adjoint $F$.

Given a function $f: \fC \to \fD$, there is a functor $f^* : \sProp^{\Gr(\fD)} \to \sProp^{\Gr(\fC)}$ which sends a simplicial $\Gr(\fD)$-prop $\calQ$ to a simplicial $\Gr(\fC)$-prop $f^*\calQ$ whose underlying family of simplicial sets is
\[
	U(f^*\calQ) = \left\{ f^* \calQ \dc = \calQ \fdc \right\}.
\]
Rephrasing our earlier definition, a morphism $\calP \to \calQ$ in $\bN$ is the same thing as a pair $(f : \fC \to \fD, \calP \to f^*\calQ)$ where $\fC = \col(\calP)$, $\fD = \col(\calQ)$, and $\calP \to f^*\calQ$ is a morphism in $\sProp^{\Gr(\fC)}$.

Now consider the set $\fE = \{ 1,2, \dots, n, 1', 2', \dots, m' \}$; we have that functions $\fE \to \fC$ are in bijection with biprofiles $\dc$ satisfying $|\uc| = n$ and $|\ud| = m$.
The projection $\prod_{b,a \geq 0} \prod_{\fE^b \times \fE^a} \sSet \to \sSet$ which picks out $\calP\binom{1', 2', \dots, m'}{1, 2, \dots, n}$ admits a left adjoint $F'$ (obtained by putting $\varnothing$ in all other entries).
Define $\calG_{n,m}[-]$ as the composite
\[
	\sSet \overset{F'}\to \prod_{b,a \geq 0} \prod_{\fE^b \times \fE^a} \sSet \overset{F}\to \sProp^{\Gr(\fE)} \to \sProp^{\Gr} = \bN.
\]

To see that this functor satisfies the desired universal property, we use that maps $\calG_{n,m}[X] \to \calP$ are the same thing as pairs $(f: \fE \to \fC, \calG_{n,m}[X] \to f^* \calP)$. As we said above, such an $f$ is the same thing as a biprofile $\dc = \binom{f1', \dots, fm'}{f1, \dots, fn}$ in $\fC$.
Further, since $\calG_{n,m}[X]$ is a free object in $\sProp^{\Gr(\fE)}$, maps $\calG_{n,m}[X] \to f^*\calP$ are in bijection with maps $X \to \calP \dc$ of simplicial sets.
\end{proof}

Recall that the Kan-Quillen model structure on $\sSet$ is cofibrantly generated, with generating cofibrations the boundary inclusions $\partial \Delta[p] \to \Delta[p]$ for $p\geq 0$ and generating acyclic cofibrations the horn inclusions $\Lambda[k,p] \to \Delta[p]$ with $0 \leq k \leq p$.

\begin{definition}\label{IJ sets}
Define two sets $I$ and $J$ of morphisms of $\bN$.
\begin{itemize}
\item The set $I$ consists of the maps 
\[ \calG_{n,m} [\partial \Delta[p] ] \to \calG_{n,m} [\Delta[p]],\]
where $n,m,p\geq 0$. 
\item The set $J$ consists of 
\[ \calG_{n,m} [\Lambda[k,p] ] \to \calG_{n,m} [\Delta[p]], \]where $n,m,p \geq 0$ and $0\leq k \leq p$. 
\end{itemize}
\end{definition}

\begin{proposition}\label{J liftings}
The class $\rlp J$ is the collection of all maps $f : \calP \to \calQ$ so that 
\[
	f_{\dch} : \calP \dc \to \calQ\fdc 
\]
is a fibration for all biprofiles $\dc$.
The class $\rlp I$ is the collection of all maps $f : \calP \to \calQ$ so that $f_{\dch} : \calP \dc \to \calQ\fdc$ is an acyclic fibration for all biprofiles $\dc$.
\end{proposition}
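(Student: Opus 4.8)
The plan is to prove both statements by directly decoding lifting problems in $\bN$ into lifting problems in $\sSet$, using the universal property of $\calG_{n,m}$ from Definition \ref{GNM characterization}. The essential observation is that a map out of $\calG_{n,m}[X]$ records a single biprofile together with a simplicial map into the corresponding component, and that this correspondence is natural in $X$; in particular, applying $\calG_{n,m}$ to a map of simplicial sets leaves the biprofile coordinate untouched and merely precomposes on the simplicial-set side. Consequently, every lifting square against a generator in $J$ (or $I$) is controlled by one fixed biprofile, so solving it is exactly the same as solving a lifting square in $\sSet$ against a horn inclusion (respectively, a boundary inclusion).

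Concretely, I would fix $n,m,p$ and $0 \le k \le p$, write $\iota \colon \Lambda[k,p] \to \Delta[p]$ for the horn inclusion, and consider an arbitrary lifting square
\[ \begin{tikzcd}
	\calG_{n,m}[\Lambda[k,p]] \rar{u} \dar{\calG_{n,m}[\iota]} & \calP \dar{f} \\
	\calG_{n,m}[\Delta[p]] \rar{v} & \calQ.
\end{tikzcd} \]
By Definition \ref{GNM characterization}, the map $u$ amounts to a biprofile $\dch$ in $\col(\calP)$ with $|\uc| = n$, $|\ud| = m$ together with a simplicial map $\alpha \colon \Lambda[k,p] \to \calP\dc$, while $v$ amounts to a biprofile in $\col(\calQ)$ together with a simplicial map $\beta$ defined on $\Delta[p]$. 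Tracking the naturality of the correspondence, the left-hand map $\calG_{n,m}[\iota]$ acts by precomposition with $\iota$ and fixes the biprofile, while $f$ carries the $\calP$-side biprofile $\dch$ to $\fdc$ and postcomposes with $f_{\dch}$. Commutativity of the outer square therefore forces the $\calQ$-side biprofile to equal $\fdc$ and says precisely that the square
\[ \begin{tikzcd}
	\Lambda[k,p] \rar{\alpha} \dar{\iota} & \calP\dc \dar{f_{\dch}} \\
	\Delta[p] \rar{\beta} & \calQ\fdc
\end{tikzcd} \]
commutes in $\sSet$.

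With the square thus transported, I would observe that a diagonal lift $\calG_{n,m}[\Delta[p]] \to \calP$ is, again by Definition \ref{GNM characterization}, the same datum as a simplicial map $\Delta[p] \to \calP\dc$: the upper triangle forces its biprofile to be $\dch$ and its restriction along $\iota$ to be $\alpha$, while the lower triangle forces its composite with $f_{\dch}$ to be $\beta$. Hence solutions to the lifting problem in $\bN$ are in bijection with solutions to the transported lifting problem in $\sSet$. Letting $u$ range over all choices then shows that $\calG_{n,m}[\iota] \boxslash f$ for every $n,m,k,p$ if and only if $f_{\dch}$ has the right lifting property against every horn inclusion for every biprofile $\dch$, i.e.\ $f_{\dch}$ is a Kan fibration for all $\dc$. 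Replacing the horn inclusions by the boundary inclusions $\partial\Delta[p] \to \Delta[p]$ yields the statement for $I$ verbatim, since having the right lifting property against all boundary inclusions characterizes the acyclic fibrations in $\sSet$.

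The only delicate point, and the step I expect to require the most care, is the bookkeeping in the second paragraph: verifying that the biprofile coordinate really is forced to be constant across the square, and that $f$ interacts with the correspondence of Definition \ref{GNM characterization} by sending $\dch$ to $\fdc$ and $\alpha$ to $f_{\dch}\circ\alpha$. This is exactly the naturality of the bijection $\Hom_{\bN}(\calG_{n,m}[X],\calP) \cong \{(\dch, f)\}$ in both variables; once it is spelled out, the remainder is a purely formal translation between lifting problems in $\bN$ and in $\sSet$.
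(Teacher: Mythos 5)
Your proposal is correct and matches the paper's own argument: both proofs use the universal property of $\calG_{n,m}$ from Definition \ref{GNM characterization} to translate lifting squares in $\bN$ against the generators into lifting squares in $\sSet$ against horn (resp.\ boundary) inclusions for the components $f_{\dch}$, handling the $I$-statement analogously. The only cosmetic difference is that you package the two implications as a single bijection of lifting problems and their solutions, whereas the paper checks the two directions separately.
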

\begin{proof}
We prove the first statement, the second is analogous.
Suppose that $f : \calP \to \calQ$ is in $\rlp J$ and $\dc$ is a biprofile in $\col(\calP)$.
Suppose we have any diagram
\begin{equation} \label{diagram in sset}\begin{tikzcd}
\Lambda[k,p] \dar{i} \rar{g} & \calP \dc \dar{f_{\dch}} \\ \Delta[p] \rar{h} & \calQ\fdc;
\end{tikzcd} \end{equation}
then we have a commutative diagram
\begin{equation} \label{diagram in N}\begin{tikzcd}
\calG_{n,m} [\Lambda[k,p]] \dar[swap]{\calG_{n,m}[i]} \rar{\dch, g} & \calP  \dar{f} \\ \calG_{n,m} [\Delta[p]] \rar{\dcprime, h} & \calQ
\end{tikzcd} \end{equation}
where $\dcprime = \fdch$ 
by the defining property of $\calG_{n,m}$; since $f\in \rlp{J}$, the latter diagram has a lift, hence so does the former diagram. Thus $f_{\dch} \in \rlp{ (\Lambda[k,p]  \to \Delta[p])},$ the class of fibrations of $\sSet$.

Next suppose that $f : \calP \to \calQ$ is a map so that $f_{\dch}$ is a fibration for all $\dch$.
Then, given a diagram of the form \eqref{diagram in N}, 
the top map gives a biprofile $\dc$ in $\col(\calP)$ and the diagram \eqref{diagram in sset} commutes, where, necessarily, $\fdch = \dcprime$. But since $f_{\dch}$ is a fibration, a lift $t: \Delta[p] \to \calP\dc$ exists. This induces a lift in the diagram \eqref{diagram in N} by the universal property of Definition \ref{GNM characterization}. This is true for any diagram of this form, hence $f\in \rlp{J}$.
\end{proof}

By definition, $\calL$ consists of those maps which satisfy \eqref{W1} of Definition \ref{WEandfibrations}. The previous proposition establishes that $\rlp J$ consists of those maps which satisfy \eqref{F1} and $\rlp I$ consists of those maps that satisfy both \eqref{F1} and \eqref{W1}. We thus have the following corollary.

\begin{corollary}\label{local liftings}
Let $\bN = \sProp^{\Gr}$ be the category of simplicial props, simplicial properads, or simplicial dioperads.
If $I$ and $J$ are as in Definition \ref{IJ sets}, then
\[ \rlp I = \calL \cap \rlp J. \] \qed
\end{corollary}

This verifies \eqref{ST6} of Theorem \ref{squeezy theorem} in the proof of the main theorem (using $\Gr = \Gr^\uparrow_\text{c}$ in \ref{properadstheorem} and $\Gr = \Gr^\uparrow_\text{di}$ in \ref{dioperadstheorem}).

\section{Adjunctions over \texorpdfstring{$\sCat$}{sCat}}\label{section adj}

Suppose that we have an adjunction $(F_2, U_2)$ over $\sCat$:
\[ \begin{tikzcd}
\sCat \dar[swap]{\pi_0} \arrow[r, shift left, "F_1"] \arrow[rr, bend left=50, swap,  "F_0"]& 
\bN \arrow[r, shift left, "F_2"] \arrow[l, shift left, "U_1"] &
\bP \arrow[l, shift left, "U_2"] \arrow[ll, bend left=50, swap,  "U_0"] \\
\Cat
\end{tikzcd} \]
Write $\eta: \id_{\bN} \Rightarrow U_2F_2$ for the unit of the adjunction.
We say that $(F_2, U_2)$ is \emph{categorically well-behaved} if 
\begin{enumerate}
	\item $\pi_0 U_1 (\eta_X) : \pi_0 U_1 X \to \pi_0 U_1 U_2F_2 X$ is the identity on objects for all $X$. \label{CATWB1}
	\item The map
	\[
		\Iso(\pi_0 U_1 X) \to \Iso(\pi_0 U_1 U_2F_2 X)
	\]
	induced by $\pi_0 U_1 \eta_X$
	is a bijection for all $X$. \label{CATWB2}
\end{enumerate}

\begin{proposition}\label{categorically well-behaved prop}
Suppose that the adjunction is categorically well-behaved and $f : X \to Y \in \bN$.
	\begin{itemize}
		\item If $\pi_0U_1U_2F_2(f)$ is essentially surjective, then so is $\pi_0U_1(f)$. 
		\item If $\pi_0U_1U_2F_2(f)$ is an isofibration, then so is $\pi_0U_1(f)$. 
	\end{itemize}
\end{proposition}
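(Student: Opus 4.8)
The plan is to reduce everything to the naturality square of the unit $\eta$ at the morphism $f$. Abbreviate $g = \pi_0 U_1(f)$ and $\bar g = \pi_0 U_1 U_2 F_2(f)$, and write $\phi_X = \pi_0 U_1(\eta_X)$, $\phi_Y = \pi_0 U_1(\eta_Y)$. Applying the functor $\pi_0 U_1$ to the naturality square $\eta_Y \circ f = U_2 F_2(f)\circ \eta_X$ produces a commutative square in $\Cat$,
\[
	\phi_Y \circ g = \bar g \circ \phi_X.
\]
The two conditions in the definition of \emph{categorically well-behaved} say exactly that each of $\phi_X, \phi_Y$ is the identity on objects and restricts to a bijection on isomorphisms; equivalently, each induces an isomorphism of maximal subgroupoids $\Iso(\phi_X)\colon \Iso(\pi_0 U_1 X) \xrightarrow{\ \cong\ } \Iso(\pi_0 U_1 U_2 F_2 X)$, and similarly for $Y$.

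I would then isolate two elementary facts about a functor $h$ in $\Cat$. First, both ``essentially surjective'' and ``isofibration'' are properties of $h$ that are detected by the induced functor of maximal subgroupoids $\Iso(h)$, since each condition only references objects, isomorphisms, and the object-assignment of $h$. Second, both properties are stable under pre- and post-composition with isomorphisms of categories, because isomorphisms of categories are both essentially surjective and isofibrations, and both classes are closed under composition. Granting these, I pass the displayed square to maximal subgroupoids to get $\Iso(g) = \Iso(\phi_Y)^{-1}\circ \Iso(\bar g)\circ \Iso(\phi_X)$ with the outer two factors invertible; hence if $\bar g$ is essentially surjective (resp.\ an isofibration) then so is $\Iso(\bar g)$, therefore so is $\Iso(g)$, and therefore so is $g$. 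This settles both bullets at once.

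For a reader who prefers to avoid the core formalism, I would instead run a direct diagram chase through the square, using condition (1) to identify objects across $\phi_X$ and $\phi_Y$ and condition (2) to transport witnessing isomorphisms. For essential surjectivity: given an object $y$ of $\pi_0 U_1 Y$, essential surjectivity of $\bar g$ supplies an object $x$ and an isomorphism $\bar g(x)\cong y$ upstairs, which I pull back along the bijection of condition (2) for $\phi_Y$ to an isomorphism $g(x)\cong y$ in $\pi_0 U_1 Y$. For the isofibration property: given an isomorphism $g(e)\to b$ in $\pi_0 U_1 Y$, push it forward by $\phi_Y$ to $\bar g(e)\to b$, lift it along the isofibration $\bar g$ to an isomorphism $e\to w$ upstairs, pull this lift back by $\phi_X$ to an isomorphism $e\to w$ in $\pi_0 U_1 X$, and verify it maps to the original arrow by invoking injectivity of $\phi_Y$ on isomorphisms together with commutativity of the square.

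The only point requiring care --- and hence the main thing to get right --- is the bookkeeping of sources and targets of the transported isomorphisms. One must use that $\phi_X$ and $\phi_Y$ are the identity on objects so that the relevant objects are literally shared between $\pi_0 U_1(-)$ and $\pi_0 U_1 U_2 F_2(-)$, and that functors preserve isomorphisms, so that each transported arrow really is an isomorphism with the prescribed endpoints. Beyond this the argument is purely formal; there is no analytic or combinatorial content once the naturality square and the two well-behavedness conditions are available.
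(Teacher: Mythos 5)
Your proposal is correct, and your ``fallback'' argument is in fact the paper's own proof: the paper abbreviates $T_2=U_2F_2$ and runs exactly the diagram chase you describe --- for essential surjectivity, pull the witnessing isomorphism back along the bijection of condition (2) for $\eta_Y$; for the isofibration property, push forward along $\pi_0U_1(\eta_Y)$, lift, pull the lift back along the bijection for $\eta_X$, and conclude $\pi_0U_1(f)(\tilde\psi)=\tilde\phi$ by injectivity of $\pi_0U_1(\eta_Y)$ on isomorphisms. Your primary argument is a genuinely tidier repackaging of the same content: conditions (1) and (2) say precisely that $\phi_X=\pi_0U_1(\eta_X)$ and $\phi_Y=\pi_0U_1(\eta_Y)$ restrict to functors on maximal subgroupoids that are bijective on objects and on morphisms, hence are isomorphisms of categories; both essential surjectivity and the isofibration property of a functor $h$ are equivalent to the same properties of $\Iso(h)$; and both properties are closed under pre- and post-composition with isomorphisms. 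The factorization $\Iso(g)=\Iso(\phi_Y)^{-1}\circ\Iso(\bar g)\circ\Iso(\phi_X)$ then settles both bullets at once, whereas the paper treats them by two separate (and more elementary) chases. The one bookkeeping point your structural version silently relies on --- that $g(x)$ and $\bar g(x)$ are literally the same object, via condition (1) together with naturality of $\eta$ on objects --- is exactly the point you flag at the end, so nothing is missing.
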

\begin{proof}
Within this proof, we will write $T_2 = U_2F_2$.

	Suppose that $\pi_0U_1T_2(f)$ is essentially surjective and $b\in \pi_0U_1Y$. Then there is an isomorphism
	$\phi : \pi_0U_1T_2(f)(a) \to b$ in $\pi_0U_1T_2Y$.
	By \eqref{CATWB1}, $\pi_0U_1T_2(f)(a) = \pi_0U_1(f)(a)$ and by \eqref{CATWB2} this $\phi$ comes from an isomorphism $\tilde \phi : \pi_0U_1(f)(a) \to b$ in $\pi_0U_1Y$. Since $b$ was arbitrary, $\pi_0U_1(f)$ is essentially surjective.

	Now suppose that $\pi_0U_1T_2(f)$ is an isofibration and 
	\[
		\tilde \phi : \pi_0U_1(f)(e) \to b
	\]
	is an isomorphism in $\pi_0U_1Y$. Then $\phi = \pi_0 U_1 (\eta_Y)(\tilde \phi)$ is an isomorphism in $\pi_0U_1T_2Y$, hence by the isofibration property there is a lift
	\[ \begin{tikzcd}[row sep=small]
	e \rar{\psi} & e' \\
	\pi_0U_1T_2(f)(e) \rar{\phi} & b
	\end{tikzcd} \]
	with $\psi: e \to e'$ an isomorphism of $\pi_0U_1T_2X$ and $\pi_0U_1T_2(f)(\psi) = \phi$.
	By \eqref{CATWB2}, there is a $\tilde \psi : e\to e'$ in $\pi_0U_1X$ so that $\pi_0 U_1 (\eta_X)(\tilde \psi) = \psi$.
	But then 
	\[ \pi_0 U_1 (\eta_Y)(\tilde \phi) = \phi = \pi_0U_1T_2(f)(\psi) = \pi_0U_1T_2(f)(\pi_0 U_1 (\eta_X)(\tilde \psi)) = \pi_0 U_1 (\eta_Y)(\pi_0U_1(f)(\tilde \psi)) \]
	so by injectivity of $\pi_0 U_1 (\eta_Y)$ on isomorphisms, we must have
	$\pi_0U_1(f)(\tilde \psi) = \tilde \phi$. 
	Since $\tilde \phi$ was arbitrary, we then have that $\pi_0U_1(f)$ is an isofibration.
\end{proof}

\subsection{Left adjoints}\label{left adjoints}

We have a sequence of adjunctions
\[ \begin{tikzcd}
\sCat
\arrow[r, shift left]
&
\sDioperad 
\arrow[r, shift left, "F^1"] \arrow[l, shift left]
&
\sProperad
\arrow[r, shift left, "F^0"] \arrow[l, shift left, "U^1"]
&
\sProp
\arrow[l, shift left, "U^0"]
\end{tikzcd} \]
Our next goal is to show that the adjunctions $F^1 \colon \sDioperad \rightleftarrows \sProperad \colon U^1$ and $F^0 \colon \sProperad \rightleftarrows \sProp \colon U^0$ over $\sCat$ are categorically well-behaved.
To do so, we recall the description of the left adjoints from \cite[\S 12.1.3]{yj}.

\newcommand{\extensioncat}{\calD_{\fC}\dc}
\newcommand{\extensioncategoryone}{\calD_{\fC}^1\dc}
\newcommand{\extensioncategoryzero}{\calD_{\fC}^0\dc}
\newcommand{\extensioncati}{\calD_{\fC}^i\dc}

Let $\Gr \leq \Gr'$ be a pair of pasting schemes, $\fC$ a set of colors, and $\dch$ be a biprofile in $\fC$.
The \emph{extension category} $\extensioncat$ has objects $\Ob(\extensioncat) = \Gr'(\fC)\dc$.
A morphism\footnote{To ensure that $\Hom(K,G)$ is a \emph{set} instead of a proper class, one should make an assumption that the substitution data is given, as in Remark \ref{remark strict iso}, by strict isomorphism classes of graphs.} $K \to G$ consists of substitution data $\{ H_v \}$ so that $K = G\{H_v\}$ where $H_v \in \Gr\profilev$.
Composition is given by associativity of graph substitution, that is, given $\{ I_w \} : J \to K$ and $\{ H_v \} : K \to G$, use the isomorphism $\vertex(K) \cong \coprod_{v\in G} \vertex(H_v)$ to reindex, so $\{I_w\}_{w\in K} = \{I_u^v\}_{v\in G, u\in H_v}$;  we then have
\[
	J = K\{I_w\}_{w\in K} =  (G\{H_v\}_{v\in G})\{I_w\} = G\{ H_v \{I_u^v \}_{u\in H_v} \}_{v\in G}. 
\]
This gives a morphism $\{ H_v \{I_u^v \} \} : J \to G$ since each $H_v \{I_u^v \} \in \Gr\profilev$.
Recall from \cite[Lemma 12.6]{yj} that the entries of the functor
\[
	F : \sProp^{\Gr} \to \sProp^{\Gr'}
\]
are given by
\begin{equation}\label{general left adjoint}
	F\calP \dc = \colim_{\extensioncat} \calP[G].
\end{equation}
The unit of the adjunction is given at a biprofile $\dc$ by the inclusion $\{ C_{\dch} \} \hookrightarrow \extensioncat$ 
yielding
\begin{equation}\label{unit of adjunction thing}
	\calP \dc = \calP[C_{\dch}] \to \colim_{\extensioncat} \calP[G] = F\calP \dc.
\end{equation}

In the case when $\Gr \leq \Gr'$ is the pair of pasting schemes 
\[
	\Gr^\uparrow_\text{c} \leq \Gr^\uparrow
\]
we will write the extension category as $\extensioncategoryzero$ and when we have
the pair of pasting schemes
\[
	\Gr^\uparrow_\text{di} \leq \Gr^\uparrow_\text{c}
\]
we will write the extension category as $\extensioncategoryone$.
Thus, for $i=0,1$ we have
\[ 
	F^i\calP \dc = \colim_{\extensioncati} \calP[G].
\]

We know that
\[
	\Ob\left(\extensioncati\right) = \begin{cases}
		\Gr^\uparrow(\fC)\dc & i=0 \\
		\Gr^\uparrow_\text{c}(\fC)\dc & i=1.
	\end{cases}
\]
By forgetting structure, each graph $G$ is a (1-skeletal) CW complex, so we can define a map $\beta_i$ (for `Betti number')
\begin{align*}
\beta_i \colon \Ob\left(\extensioncati\right)  &\to \mathbb N \\
G & \mapsto \operatorname{rank} \tilde H_i(G; \mathbb Z)
\end{align*}
Suppose that $G\{H_v \} \to G$ is a morphism of $\extensioncategoryzero$.
Since each $H_v$ is connected, both $G\{H_v\}$ and $G$ have the same number of connected components, hence $\beta_0$ extends to a functor from $\extensioncategoryzero$ to the discrete category $\mathbb N$.
If $G\{H_v \} \to G$ is a morphism of $\extensioncategoryone$ then each $H_v$ is in $\Gr^\uparrow_\text{di}$, hence contractible, so $\beta_1 (G\{H_v \}) = \beta_1(G)$.
Thus $\beta_1$ extends to a functor from $\extensioncategoryone$ to the discrete category $\mathbb N$.

We have thus shown that the extension categories split, that is, that 
\[
	\extensioncati = \coprod_{j \geq 0} \beta_i^{-1}(j).
\]
This implies that the colimits split, so we have
\begin{equation}\label{functorial F splitting}
	F^i\calP \dc = \colim_{\extensioncati} \calP[G] = \coprod_{j \geq 0} \colim_{\beta_i^{-1}(j)} \calP[G]
\end{equation}
and this splitting is respected by the maps $(F^if)_{\dch} : F^i\calP \dc \to F^i\calQ \fdc$ for any $f: \calP \to \calQ$.

Suppose that $G$ is any graph.
We then have $\beta_0(G) = 0$ if and only if $G\in \Gr^\uparrow_\text{c}$. In this case, we also have $\beta_1(G) = 0$ if and only if $G\in \Gr^\uparrow_\text{di}$.

\begin{proposition}\label{betazerodecomp}
For $i=0,1$, there is a splitting 
\[
	F^i\calP\dc = \calP\dc \amalg \widetilde F^i_{\dch}(\calP),
\]
functorial in maps $\calP \to \calQ$ of simplicial properads (for $i=0$) or simplicial dioperads (for $i=1$).
\end{proposition}
\begin{proof} The splitting comes from \eqref{functorial F splitting}. We already mentioned that this splitting extends to maps.
Set 
\[
	\widetilde F^i_{\dch} (\calP) = \coprod_{j \geq 1} \colim_{\beta_i^{-1}(j)} \calP[G] = \colim_{\beta_i^{-1}[1,\infty)} \calP[G].
\]
The subcategory $\beta_i^{-1}(0) \subset \extensioncati$ contains a terminal object $C_{\dch}$ (cf.  \cite[2.12]{hackneyrobertson2}), so \[
\colim_{\beta_i^{-1}(0)} \calP[G] = \calP[C_{\dch}] = \calP \dc. \]
\end{proof}

This proposition and its proof shows that the unit of the adjunction \eqref{unit of adjunction thing} is injective, so we have the following corollary.

\begin{corollary}\label{Lifaithful}
	The functor $F^i$ is faithful. \qed
\end{corollary}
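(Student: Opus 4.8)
The plan is to deduce faithfulness of $L_i$ from the general principle that a left adjoint is faithful as soon as every component of the unit of its adjunction is a monomorphism, and to read off the monomorphism property directly from the coproduct decomposition of Proposition \ref{betazerodecomp}.

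First I would identify the unit $\eta \colon \id \Rightarrow U_iL_i$ of the adjunction $L_i \dashv U_i$ explicitly. Since $U_i$ is the restriction functor---which leaves the value of a prop on each biprofile untouched and is the identity on color sets---and since $L_i$ preserves color sets, the color map of each component $\eta_\calP$ is an identity, while at a biprofile $\dc$ the component is the coprojection
\[
	\calP\dc = \calP[C_{\dch}] \longrightarrow \colim_{\extensioncati}\calP[G] = L_i\calP\dc
\]
arising from the object $C_{\dch}$ of $\beta_i^{-1}(0)$ used in the proof of Proposition \ref{betazerodecomp}. Under the decomposition of that proposition, this coprojection is precisely the inclusion of the summand $\calP\dc$ into $\calP\dc \amalg \tilde\beta_i^{-1}([1,\infty))$.

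Next I would observe that the inclusion of a summand into a coproduct is a monomorphism in $\sSet$. Consequently each $\eta_\calP$ is levelwise monic and, being also an identity on colors, is a monomorphism in $\sProp^{\Gr}$: if $\phi, \psi$ are parallel maps with $\eta_\calP \circ \phi = \eta_\calP \circ \psi$, then comparing the color maps and then the (injective) biprofile components forces $\phi = \psi$.

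Finally I would conclude by naturality. Given $\phi, \psi \colon \calP \to \calQ$ with $L_i\phi = L_i\psi$, the naturality square for $\eta$ gives
\[
	\eta_\calQ \circ \phi = U_iL_i\phi \circ \eta_\calP = U_iL_i\psi \circ \eta_\calP = \eta_\calQ \circ \psi,
\]
and since $\eta_\calQ$ is a monomorphism we obtain $\phi = \psi$; hence $L_i$ is faithful. There is no serious obstacle here---the substance is entirely contained in Proposition \ref{betazerodecomp}. The only points demanding care are in the second step: one must verify that the unit component really is the summand inclusion (equivalently, the coprojection indexed by the terminal object $C_{\dch}$ of $\beta_i^{-1}(0)$) rather than some other map into the colimit, and that a morphism of $\sProp^{\Gr}$ which is an identity on colors and levelwise monic is indeed left-cancellable.
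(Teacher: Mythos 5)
Your proposal is correct and is essentially the paper's own argument: the paper deduces the corollary from the single observation that the unit of the adjunction is injective, which is exactly what you extract from the coproduct decomposition of Proposition \ref{betazerodecomp}, followed by the standard fact that a left adjoint with monic unit components is faithful. You have merely spelled out the details (the unit as the coprojection indexed by $C_{\dch}$, and the mono-implies-faithful naturality argument) that the paper leaves implicit.
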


\begin{proposition}
	The adjunctions 
\[ \begin{tikzcd}
\sCat
\arrow[r, shift left]
&
\sDioperad 
\arrow[r, shift left, "F^1"] \arrow[l, shift left]
&
\sProperad 
\arrow[l, shift left, "U^1"]
\end{tikzcd}\]
and
\[ 
\begin{tikzcd}
\sCat
\arrow[r, shift left]
&
\sProperad
\arrow[r, shift left, "F^0"] \arrow[l, shift left]
&
\sProp
\arrow[l, shift left, "U^0"]
\end{tikzcd}
\]
over $\sCat$ are categorically well-behaved.
\end{proposition}
\begin{proof}
For uniformity of argument, we will generically write
\[
	F \colon \sCat \rightleftarrows \sDioperad \colon U \qquad \& \qquad F \colon \sCat \rightleftarrows \sProperad \colon U
\] for the adjunctions to $\sCat$ and 
\[ \Gr\dc = \begin{cases} 
\Gr^\uparrow\dc & i=0 \\
\Gr^\uparrow_\text{c}\dc &  i=1. \end{cases} \]

The first condition is automatic since $\calP$ and $U^iF^i \calP$ have the same set of colors, which gives the object set for $\pi_0U\calP$ and $\pi_0UU^iF^i\calP$.

Since $F^i$ is faithful by Corollary \ref{Lifaithful}, we know that the map on isomorphism sets is injective.
Suppose that we have an isomorphism
\[
	\alpha \in \pi_0 (U U^iF^i \calP) (x,y).
\]
We wish to show that $\alpha$ was actually already in $\pi_0 (U \calP) (x,y)$.
Then $\alpha$ is represented by the image $\bar a$ of some vertex
\[
	a \in \calP [G]_0 \to F^i\calP \binom{y}{x}_0
\]
for some $G \in \Gr\binom{y}{x}$.
Let $a' \in \calP[G']_0 \to F^i\calP \binom{x}{y}_0$ be a vertex whose image $\bar a'$ represents $\alpha^{-1} \in \pi_0 (UU^iF^i \calP) (y,x)$.
Consider the graph $G' \circ_1 G \in \Gr\binom{x}{x}$,
\begin{center}
\includegraphics[height=3.5cm]{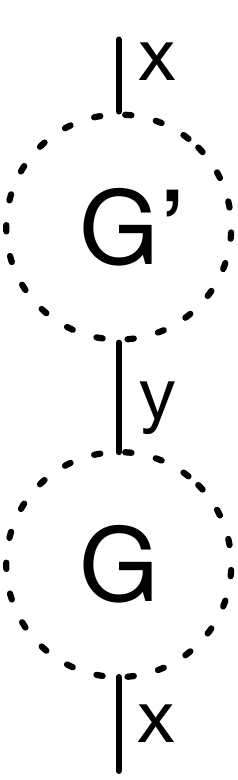}
\end{center}
given by grafting the output edge of $G$ to the input edge of $G'$.
We know that $\bar a' \circ_1 \bar a$ can be obtained by looking at $a' \circ_1 a \in \calP[G'\circ_1 G]$; we then have 
\[ \begin{tikzcd}[row sep=tiny]
 \calP[G'\circ_1 G] \rar & F^i\calP \binom{x}{x} & \calP[C_{(x;x)}] \lar \\
 a' \circ_1 a \rar[mapsto] & \bar a' \circ_1 \bar a \sim \id_x & \id_x \lar[mapsto] 
\end{tikzcd} \]
Since $\bar a' \circ_1 \bar a$ and $\id_x$ represent the same element of $\pi_0 F^i\calP \binom{x}{x}$, we have $\beta_i (G' \circ_1 \bar G) = \beta_i C_{(x;x)} = 0$.
But by Mayer-Vietoris (for reduced homology), $\beta_i(G') + \beta_i(G) = \beta_i(G'\circ_1 G)$, hence $\beta_i(G) = 0$ as well.
Thus the image of $a$ in $F^i\calP\binom{y}{x}$ is represented by an element $a''\in \calP\binom{y}{x}_0$ by Proposition \ref{betazerodecomp}.
It follows that $\alpha = [a] = [a''] \in \pi_0 (U \calP) (x,y)$, as we wished to show.
\end{proof}

\section{The model structure on simplicial properads}\label{section model}

In Definition \ref{IJ sets}, we gave two sets of maps of $\sProp^{\Gr}$. We now give two sets of maps of $\sCat$.
If $X$ is a simplicial set, write $\calG_{1,1}[X]$ for the simplicial category with two objects $x,y$ and
\begin{align*}
\Hom(x,x) &= \Delta[0] & \Hom(x,y) &= X \\
\Hom(y,y) &= \Delta[0] & \Hom(y,x) &=\varnothing.
\end{align*}
As in Definition \ref{GNM characterization}, we consider $\calG_{1,1}[-]$ as a functor from $\sSet$ to $\sCat$.
Let $\calI$ be the category with one object $x$ and no non-identity morphisms.
We consider the class of simplicial categories $\calH$ with two objects $x$ and $y$, weakly contractible function complexes, and only countably many simplices in each function complex. Furthermore, we require that each such $\calH$ is cofibrant in the Dwyer-Kan model category structure on $\sCat_{\{x,y\}}$ \cite[7.1.(iii)]{dk-simploc}. Let $\mathbf{H}$ denote a set of representatives of isomorphism classes of such categories.

\begin{definition}\label{generatingcof}
The set $I_0$ consists of the following simplicial functors:
\begin{enumerate}[label={(C{\arabic*})},ref={C\arabic*}]
        \item\label{C1} For $p\geq 0$, the maps $\calG_{1,1} [\partial \Delta[p] ] \to \calG_{1,1} [\Delta[p]]$.
        \item\label{C2} The $\sSet$-functor $\varnothing\hookrightarrow \calI$.
\end{enumerate}
The set $J_0$ of consists of the following simplicial functors:
\begin{enumerate}[label={(A{\arabic*})},ref={A\arabic*}]
        \item\label{A1} For $p \geq 0$ and $0\leq k \leq p$, the maps $\calG_{1,1} [\Lambda[k,p] ] \to \calG_{1,1} [\Delta[p]]$.
        \item\label{A2} The $\sSet$-functors $\calI\hookrightarrow\calH$ 
        for $\calH \in \mathbf{H}$ which take $x$ to $x$.
\end{enumerate}
\end{definition}

Note that \eqref{C1} and \eqref{A1} give non-empty intersections $F_1(I_0) \cap I$ and $F_1(J_0) \cap J$.

\begin{theorem}[Characterization of fibrations]
	A map $f \in \sProp^{\Gr}$ is a fibration in the sense of Definition \ref{WEandfibrations} if and only if $f\in \rlp{(F_1 J_0 \cup J)}$.
\end{theorem}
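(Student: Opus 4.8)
The plan is to exploit the adjunction $F_1 \colon \sCat \rightleftarrows \bN \colon U_1$ together with Bergner's model structure on $\sCat$, for which $J_0$ from Definition \ref{generatingcof} is a set of generating acyclic cofibrations. I would begin by splitting the lifting class as
\[
	\rlp{(F_1 J_0 \cup J)} = \rlp{(F_1 J_0)} \cap \rlp{J},
\]
and then match the two factors against \eqref{F1} and \eqref{F2} separately. For the factor $\rlp{J}$ there is nothing new to do: Proposition \ref{J liftings} already identifies it as precisely the collection of maps $f$ for which $f_{\dch}\colon \calP\dc \to \calQ\fdc$ is a Kan fibration for every biprofile $\dc$, i.e.\ exactly condition \eqref{F1}.

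The substance is in the factor $\rlp{(F_1 J_0)}$. Here I would apply Lemma \ref{rlp adjunction} to the adjunction $(F_1, U_1)$ to obtain $\rlp{(F_1 J_0)} = U_1^{-1}(\rlp{J_0})$, so that $f \in \rlp{(F_1 J_0)}$ if and only if $U_1(f)$ lies in $\rlp{J_0}$. The key input is Bergner's theorem \cite{bergner} (building on \cite{dk-simploc}): since $J_0$ generates the acyclic cofibrations of the model structure on $\sCat$, the class $\rlp{J_0}$ is exactly the class of fibrations there, and a simplicial functor $g\colon \calC \to \calD$ is such a fibration if and only if each $g\colon \calC(a,b) \to \calD(ga,gb)$ is a Kan fibration and $\pi_0 g$ is an isofibration. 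Applying this to $g = U_1(f)$, and using $U_1\calP(a,b) = \calP\binom{b}{a}$ together with the fact that $\pi_0 U_1(f)$ is the functor $\pi_0 f$ appearing in \eqref{F2}, membership $f \in \rlp{(F_1 J_0)}$ becomes: $f_{\binom{b}{a}}$ is a Kan fibration for all single-input single-output biprofiles, together with $\pi_0 f$ being an isofibration. The first of these is just the $(1,1)$-case of \eqref{F1} (this overlap is precisely the noted non-empty intersection $F_1(J_0) \cap J$), and the second is exactly \eqref{F2}.

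Intersecting the two descriptions finishes the argument: $f\in \rlp{(F_1 J_0 \cup J)}$ if and only if \eqref{F1} holds for \emph{all} biprofiles (this comes from $\rlp{J}$, which subsumes the $(1,1)$-case contributed by $\rlp{(F_1 J_0)}$) and \eqref{F2} holds (from $\rlp{(F_1 J_0)}$), which is precisely the definition of a fibration in Definition \ref{WEandfibrations}. Since every step above is an equivalence, both directions of the biconditional follow simultaneously.

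The main obstacle is the identification of $\rlp{J_0}$ with the Dwyer-Kan/Bergner fibrations of $\sCat$; once this is in hand, the manipulation of lifting classes is entirely formal. Concretely, one must verify (or cite with care) that the specific generators \eqref{A1} and \eqref{A2} of Definition \ref{generatingcof} really do cut out those fibrations -- in particular that the \eqref{A2} maps $\calI \hookrightarrow \calH$ are exactly what forces $\pi_0 f$ to be an isofibration, and that the contractibility, countability, and cofibrancy hypotheses imposed on the family $\mathbf{H}$ are the correct ones to make $J_0$ a genuine generating set of acyclic cofibrations.
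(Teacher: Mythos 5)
Your proposal is correct and follows essentially the same route as the paper's own proof: splitting $\rlp{(F_1 J_0 \cup J)} = \rlp{(F_1 J_0)} \cap \rlp{J}$, identifying $\rlp{J}$ with condition \eqref{F1} via Proposition \ref{J liftings}, and identifying $\rlp{(F_1 J_0)} = U_1^\inv(\rlp{J_0})$ via Lemma \ref{rlp adjunction} together with the fact that $\rlp{J_0}$ is the class of Bergner fibrations in $\sCat$ (levelwise Kan fibrations plus $\pi_0$ an isofibration). Your closing caveat about verifying that $J_0$ genuinely generates the acyclic cofibrations of $\sCat$ is well placed, but this is exactly what the paper delegates to \cite{bergner} by taking Definition \ref{generatingcof} to be Bergner's generating sets, so no gap remains.
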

\begin{proof}
By Proposition \ref{J liftings}, $f\in \rlp{J}$ if and only if $f$ satisfies \eqref{F1}. 
By Lemma \ref{rlp adjunction}, $\rlp{(F_1J_0)} = U_1^\inv (\rlp{J_0})$, and $\rlp{J_0}$ is the class of fibrations in $\sCat$.
Thus $f : \calP \to \calQ \in \rlp{(F_1J_0)}$ if and only if $U_1(f)$ is a fibration in $\sCat$ if and only if $\pi_0(f)$ is an isofibration and $f : \calP \binom{d}{c} \to \calQ \binom{fd}{fc}$ is a fibration for all $c,d$.

Thus maps in $\rlp{(F_1 J_0 \cup J)} = \rlp{(F_1J_0)} \cap \rlp{J}$ satisfy both \eqref{F1} and \eqref{F2}. Moreover, if $f$ satisfies \eqref{F1} and \eqref{F2}, then $U_1(f)$ is a fibration in $\sCat$ and $f\in \rlp{J}$, hence $f\in \rlp{(F_1J_0)} \cap \rlp{J}$.
\end{proof}

\begin{theorem}\label{properadstheorem}
The category $\sProperad$ of all small properads admits a (cofibrantly-generated) model structure with fibrations and weak equivalences as in Definition \ref{WEandfibrations}.
The set $F_1I_0 \cup I$ is a set of generating cofibrations and the set $F_1J_0\cup J$ is a set of generating acyclic cofibrations.
\end{theorem}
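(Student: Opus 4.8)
The plan is to apply Proposition~\ref{squeezy theorem} with $\bM = \sCat$, $\bN = \sProperad$, and $\bP = \sProp$. The relevant adjunction over $\sCat$ is $L_0 \colon \sProperad \rightleftarrows \sProp \colon U_0$, so that $F_2 = L_0$ and $U_2 = U_0$, sitting over the adjunction $F_1 \dashv U_1$ from $\sCat$ to $\sProperad$; I take $\calL$ to be the local equivalences $\calL_{\bN}$ of Section~\ref{section local}, I take $I, J$ to be the sets of Definition~\ref{IJ sets}, and $I_0, J_0$ the sets of Definition~\ref{generatingcof}. With these choices the output model structure has generating (acyclic) cofibrations $F_1 I_0 \cup I$ (resp.\ $F_1 J_0 \cup J$) and weak equivalences $\wn = (U_1^{-1}\wm) \cap \calL$, so it remains only to verify the seven hypotheses \eqref{ST1}--\eqref{ST7}.

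Several of these are already available. Hypothesis~\eqref{ST1} is the Dwyer--Kan/Bergner model structure on $\sCat$, whose generating (acyclic) cofibrations are exactly $I_0$ (resp.\ $J_0$); hypothesis~\eqref{ST2} is the model structure on $\sProp$ of \cite{hackneyrobertson2}, once one rewrites its generating (acyclic) cofibrations in the form $F_0 I_0 \cup F_2 I$ (resp.\ $F_0 J_0 \cup F_2 J$) with $F_0 = L_0 F_1$. Hypothesis~\eqref{ST5} is precisely Theorem~\ref{lemma twoofthree}, and hypothesis~\eqref{ST6} is precisely Corollary~\ref{local liftings}. For the smallness hypotheses \eqref{ST3} and \eqref{ST4} I would note that every domain occurring in $F_1 I_0 \cup I$ or $F_1 J_0 \cup J$ is freely generated by a finite simplicial set (over a single biprofile, or over the one-/two-object simplicial categories of Definition~\ref{generatingcof}); since the forgetful functors to families of simplicial sets create filtered colimits, such an object is finite and hence small relative to any class of maps.

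This leaves hypothesis~\eqref{ST7}, namely $L_0^{-1}(\calW_{\bP}) \subset \wn$, which I expect to be the crux, and which is exactly what Sections~\ref{section local} and \ref{section adj} were assembled to supply. Suppose $f \colon \calP \to \calQ$ satisfies $L_0(f) \in \calW_{\bP}$. I would first show $f \in \calL$. The function $\tilde\beta_0$ of Proposition~\ref{betazerodecomp} depends only on the underlying graph, hence is natural in the properad, so $L_0(f)$ respects the splitting $L_0\calP\dc = \calP\dc \amalg \tilde\beta_0^{-1}([1,\infty))$; since $L_0(f)$ satisfies \eqref{W1} and a weak equivalence of simplicial sets respecting such a coproduct decomposition restricts to a weak equivalence on each summand, the summand map $f_{\dch} \colon \calP\dc \to \calQ\fdc$ is a weak equivalence for every biprofile, that is $f \in \calL$. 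In particular $U_1(f)$ satisfies \eqref{W1}, and letting $\dc$ range over the one-to-one biprofiles shows that $\pi_0 U_1(f)$ is fully faithful.

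It then remains to see that $\pi_0 U_1(f)$ is essentially surjective, whence $U_1(f) \in \wm$ and $f \in \wn$. Here $L_0(f) \in \calW_{\bP}$ forces $\pi_0 U_1 U_0 L_0(f)$ to be an equivalence of categories, in particular essentially surjective; since the adjunction $(L_0, U_0)$ over $\sCat$ is categorically well-behaved (Section~\ref{section adj}), the first bullet of Proposition~\ref{categorically well-behaved prop} then gives essential surjectivity of $\pi_0 U_1(f)$. Together with full faithfulness this is \eqref{W2}, so $U_1(f) \in \wm$ and $f \in (U_1^{-1}\wm) \cap \calL = \wn$, establishing \eqref{ST7}. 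Proposition~\ref{squeezy theorem} now produces the cofibrantly generated model structure with the stated generators; its weak equivalences are the maps satisfying \eqref{W1} and \eqref{W2}, and by the preceding characterization of fibrations its fibrations are the maps satisfying \eqref{F1} and \eqref{F2}, matching Definition~\ref{WEandfibrations}. The genuinely non-formal step is \eqref{ST7}: the naturality of $\tilde\beta_0$ together with the categorically-well-behaved input are exactly what let one transport weak equivalences across $L_0$, and this is the heart of the argument.
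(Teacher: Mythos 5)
Your proposal is correct and follows essentially the same route as the paper's own proof: the same instantiation of Proposition \ref{squeezy theorem} with $\bM = \sCat$, $\bN = \sProperad$, $\bP = \sProp$, citing Bergner and \cite{hackneyrobertson2} for \eqref{ST1} and \eqref{ST2}, Theorem \ref{lemma twoofthree} for \eqref{ST5}, Corollary \ref{local liftings} for \eqref{ST6}, and finiteness of the domains for \eqref{ST3} and \eqref{ST4}. Your treatment of the crux \eqref{ST7} is also the paper's: first deduce $f \in \calL$ from the coproduct splitting of Proposition \ref{betazerodecomp} (the map $F_2(f)$ being constant on $\tilde\beta_0$), then get essential surjectivity of $\pi_0 U_1(f)$ from Proposition \ref{categorically well-behaved prop} and full faithfulness from $f \in \calL$ on one-to-one biprofiles.
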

\begin{proof}
We will apply Proposition \ref{squeezy theorem}, using the adjunctions\footnote{The adjoint pair $(F_2,U_2)$ was called $(F^0,U^0)$ in the previous section.}
\[ \begin{tikzcd}
 \bM = \sCat \arrow[r, shift left, "F_1"] & 
\sProperad \arrow[r, shift left, "F_2"] \arrow[l, shift left, "U_1"] &
\sProp. \arrow[l, shift left, "U_2"] 
\end{tikzcd} \]
In this situation, $\calW_{\bP} = U_0^\inv \calW_{\bM} \cap U_2^\inv \calL$.

First note that $\sProperad$ is complete and cocomplete.
Clearly $\varnothing$ and $\calI$ are finite.
By the characterization of Definition \ref{GNM characterization} and a variation on \cite[3.1.2]{hovey}, we also have that 
$\calG_{n,m} [\partial \Delta[p] ]$ and $\calG_{n,m} [\Lambda[k,p] ]$ are finite.
This implies that all of these objects are small relative to both $(F_0I_0\cup I)\text{-cell}$ and $(F_0J_0\cup J)\text{-cell}$, so \eqref{ST3} and \eqref{ST4} both hold.
\eqref{ST5} is established by Theorem \ref{lemma twoofthree}.
Corollary \ref{local liftings} ensures that \eqref{ST6} holds.

Consider the class $\calL \subset \sProperad$ of local equivalences.
To show that $F_2^{-1}(U_2^{-1}\calL) \subset \calL$, suppose that $f : \calP \to \calQ$ is in the former class. 
Using Proposition \ref{betazerodecomp} we then have a diagram
\[ \begin{tikzcd}
\calP \dc \dar[hook] \arrow[rrr, "f"] & & & \calQ \fdc \dar[hook] \\
\calP \dc \amalg X \rar[equals] & F_2\calP\dc \rar{F_2(f)} & F_2\calQ\fdc & \calQ \fdc \amalg Y. \lar[equals]
\end{tikzcd} \]
By Proposition \ref{betazerodecomp}, $F_2(f)$ is a coproduct $f \amalg (X\to Y)$.
By assumption $F_2(f)$ is a weak equivalence in $\sSet$, hence so is $f$.
This shows
\begin{equation}\label{first equation}
F_2^{-1}(U_2^{-1}\calL) \subset \calL.
\end{equation}

Suppose that $F_2(f) \in \calW_{\bP} = (U_0^\inv\wm) \cap (U_2^{-1}\calL)$. 
Since $F_2(f) \in U_0^\inv\wm$, we know that $\pi_0U_0F_2(f)$ is essentially surjective, hence so is  
that $\pi_0U_1(f)$ by Proposition \ref{categorically well-behaved prop}.
Since $F_2(f) \in U_2^{-1}\calL$, we know (from the previous paragraph) that $f\in \calL$, hence $U_1(f)$ is levelwise an equivalence. Thus $\pi_0U_1(f)$ is fully-faithful so $\pi_0U_1(f)$ is an equivalence of categories and we have
\begin{equation}\label{second equation}
F_2^{-1} ( \calW_{\bP})  = F_2^{-1} \Big( (U_0^\inv\wm) \cap (U_2^{-1}\calL) \Big) \subset U_1^\inv(\wm).
\end{equation}

Combining \eqref{first equation} and \eqref{second equation}, we then have 
\[
	F_2^{-1} ( \calW_{\bP})  = F_2^{-1} \Big( (U_0^\inv\wm) \cap (U_2^{-1}\calL) \Big) \subset U_1^\inv(\wm) \cap \calL = \calW_{\bN},
\]
namely that \eqref{ST7} holds.

The fact that \eqref{ST1} holds for $\bM = \sCat$ is the main theorem of \cite{bergner}. In the case of the adjunction $\bN = \sProperad \rightleftarrows \sProp = \bP$, we have that \eqref{ST2} holds by the main theorem of \cite{hackneyrobertson2}.
Then apply Proposition \ref{squeezy theorem} to get the appropriate model structure on $\sProperad$.
\end{proof}

\begin{theorem}\label{dioperadstheorem}
The category $\sDioperad$ admits a (cofibrantly-generated) model structure with fibrations and weak equivalences as in Definition \ref{WEandfibrations}.
\end{theorem}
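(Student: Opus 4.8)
The plan is to reprise the proof of Theorem~\ref{properadstheorem} one level down, applying Proposition~\ref{squeezy theorem} with $\bM = \sCat$, $\bN = \sDioperad$, and $\bP = \sProperad$, all of which are bicomplete. For the adjunction $(F_2,U_2)$ over $\sCat$ I would take the free--forgetful pair $L_1 \colon \sDioperad \rightleftarrows \sProperad \colon U_2$, where $U_2$ sends a properad to its underlying dioperad, while $F_0 \dashv U_0$ and $F_1 \dashv U_1$ are the adjunctions between $\sCat$ and $\sProperad$, resp.\ between $\sCat$ and $\sDioperad$. The sets $I, J \subset \sDioperad$ and the class $\calL \subset \sDioperad$ of local equivalences are those of Definitions~\ref{GNM characterization}, \ref{IJ sets} and Section~\ref{section local}, now specialized to $\Gr = \Gr^\uparrow_{\text{di}}$. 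The organizing idea is a bootstrap: the model structure on $\sProp$ of \cite{hackneyrobertson2} was the external input to Theorem~\ref{properadstheorem}, and Theorem~\ref{properadstheorem} is now in turn the external input that supplies hypothesis~\eqref{ST2}.

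Most hypotheses transfer at once, because the supporting results were proved for an arbitrary $\sProp^{\Gr}$. Hypothesis~\eqref{ST1} is again Bergner's theorem \cite{bergner}; the smallness hypotheses~\eqref{ST3} and~\eqref{ST4} follow from the finiteness of $\varnothing$, $\calI$, $\calG_{n,m}[\partial\Delta[p]]$, and $\calG_{n,m}[\Lambda[k,p]]$ exactly as before; \eqref{ST5} is Theorem~\ref{lemma twoofthree}; and \eqref{ST6} is Corollary~\ref{local liftings}. For \eqref{ST2} I would cite Theorem~\ref{properadstheorem}, after checking its generators coincide with $F_0 I_0 \cup F_2 I$ and $F_0 J_0 \cup F_2 J$. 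Since $U_2$ leaves the biprofile components unchanged, $(U_2\calP)\dc = \calP\dc$, so writing $\calG_{n,m}$ ambiguously for the functor of Definition~\ref{GNM characterization} into either category, a Yoneda argument gives $L_1 \calG_{n,m}[X] \cong \calG_{n,m}[X]$; hence $F_2 I$ and $F_2 J$ are precisely the properadic sets $I, J$, while $F_0 I_0$ and $F_0 J_0$ recover the $\sCat$-generators of Theorem~\ref{properadstheorem}. Moreover the weak equivalences furnished by Theorem~\ref{properadstheorem} take the required shape $\calW_{\bP} = (U_0^{-1}\wm) \cap (U_2^{-1}\calL)$, since $U_2^{-1}\calL$ is exactly the class of properadic local equivalences.

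The one step with genuine content is~\eqref{ST7}, namely $F_2^{-1}(\calW_\bP) \subset \wn$, and it is the $i = 1$ analogue of the corresponding step in Theorem~\ref{properadstheorem}. First, to obtain $F_2^{-1}(U_2^{-1}\calL) \subset \calL$, I would apply Proposition~\ref{betazerodecomp} with $i = 1$: the map $\tilde\beta_1$ splits $L_1\calP\dc$ as $\calP\dc \amalg \tilde\beta_1^{-1}([1,\infty))$, so for $f$ the map $L_1(f)$ is a coproduct $f \amalg (X \to Y)$, and since weak equivalences of simplicial sets are detected on coproduct summands, $F_2(f) \in U_2^{-1}\calL$ forces $f \in \calL$. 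Second, since the adjunction $L_1 \dashv U_2$ is categorically well-behaved (the final proposition of Section~\ref{section adj}, proved via the Mayer--Vietoris identity $\beta_1(G'\circ G) = \beta_1(G') + \beta_1(G)$), Proposition~\ref{categorically well-behaved prop} transports essential surjectivity from $\pi_0 U_0 F_2(f)$ down to $\pi_0 U_1(f)$; combined with $f \in \calL$, which makes $\pi_0 U_1(f)$ fully faithful, this gives that $\pi_0 U_1(f)$ is an equivalence, i.e.\ $f \in U_1^{-1}\wm$. Intersecting the two inclusions yields~\eqref{ST7}.

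The substantive obstacle is therefore confined to~\eqref{ST7}, and more precisely to the Betti-number bookkeeping: one needs both that $L_1$ of a map decomposes along $\tilde\beta_1$ and that $L_1 \dashv U_2$ is categorically well-behaved, the two facts the introduction flags as the reason the theorem is \emph{not} formal. Because Proposition~\ref{betazerodecomp} and the categorical well-behavedness were established uniformly for $i = 0, 1$, this obstacle is in fact already cleared, and the remaining work is only to instantiate those results at $i = 1$ and assemble the pieces. With~\eqref{ST1}--\eqref{ST7} in hand, Proposition~\ref{squeezy theorem} produces a cofibrantly generated model structure on $\sDioperad$ with generating (acyclic) cofibrations $F_1 I_0 \cup I$ (resp.\ $F_1 J_0 \cup J$) and weak equivalences $\wn = (U_1^{-1}\wm) \cap \calL$. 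That these weak equivalences and the fibrations $\rlp{(F_1 J_0 \cup J)}$ are the ones of Definition~\ref{WEandfibrations} follows from the same generic identifications used for properads (Proposition~\ref{J liftings} together with Lemma~\ref{rlp adjunction}), none of which depend on the choice of $\Gr$.
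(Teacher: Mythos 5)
Your proposal is correct and follows exactly the paper's route: the paper's own proof simply says to rerun the argument of Theorem \ref{properadstheorem} with the adjunction $\bN = \sDioperad \rightleftarrows \sProperad = \bP$, the only change being that hypothesis \eqref{ST2} is now supplied by Theorem \ref{properadstheorem} itself. Your write-up just makes explicit the instantiations (the $i=1$ cases of Proposition \ref{betazerodecomp} and of categorical well-behavedness, and the identification $L_1\calG_{n,m}[X] \cong \calG_{n,m}[X]$ matching the generators) that the paper leaves implicit.
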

\begin{proof}
As in the proof of the previous theorem, one applies Proposition \ref{squeezy theorem}, this time using the adjunction $\bN = \sDioperad \rightleftarrows \sProperad = \bP$.
The only change necessary (other than changing `$\sProperad$' to `$\sDioperad$') is that \eqref{ST2} for $\bP = \sProperad$ is Theorem \ref{properadstheorem}.
\end{proof}

\begin{remark}
The method used in the previous two theorems does not apply to get a model structure on the category $\bN = \mathbf{sOperad}$ of simplicially-enriched operads. We cannot apply \ref{squeezy theorem} with $\bP = \sProp, \sProperad,$ or $\sDioperad$, since \eqref{ST2} will not hold for such $\bP$: operads have far fewer underlying entries than props, properads, and dioperads.
\end{remark}

\addcontentsline{toc}{section}{References}
\providecommand{\bysame}{\leavevmode\hbox to3em{\hrulefill}\thinspace}

\end{document}